\pgfplotsset{compat=1.16}
\setlist[enumerate,1]{noitemsep}
\newtheorem{Satz}{Satz}[section]
\newtheorem{Proposition}[Satz]{Proposition} 
\newtheorem{Lemma}[Satz]{Lemma}	
\newtheorem{Theorem}[Satz]{Theorem}	
\newtheorem{Corollary}[Satz]{Corollary}	
\theoremstyle{definition}
\newtheorem{Assumption}[Satz]{Assumption}
\numberwithin{equation}{section} 
\newcommand{\T}{\mathbb{T}} 
\newcommand{\R}{\mathbb{R}} 
\newcommand{\Z}{\mathbb{Z}} 
\newcommand{\N}{\mathbb{N}} 
\newcommand{\eps}{\varepsilon}
\newcommand{\dd}{\, \mathrm{d}}
\newcommand{\iu}{\mathrm{i}}
\newcommand{\opnorm}{\@ifstar\@opnorms\@opnorm}
\newcommand{\@opnorms}[1]{%
	\left|\mkern-1.5mu\left|\mkern-1.5mu\left|
	#1
	\right|\mkern-1.5mu\right|\mkern-1.5mu\right|
}
\newcommand{\@opnorm}[2][]{%
	\mathopen{#1|\mkern-1.5mu#1|\mkern-1.5mu#1|}
	#2
	\mathclose{#1|\mkern-1.5mu#1|\mkern-1.5mu#1|}
}
\title[Improved error estimates for low-regularity integrators]{Improved error estimates for low-regularity integrators using space-time bounds}
\author{Maximilian Ruff}
\address{Karlsruhe Institute of Technology, Department of Mathematics, Englerstraße 2, 76131 Karlsruhe, Germany}
\email{maximilian.ruff@kit.edu}
\thanks{Funded by the Deutsche Forschungsgemeinschaft (DFG, German Research Foundation) – Project-ID 258734477 – SFB 1173.}
\thanks{I thank Benjamin Dörich and Roland Schnaubelt for fruitful discussions. I also thank the referees for very interesting comments, which led to improvements of the presentation.} 
\subjclass[2020]{Primary 65M15; Secondary 35L71, 35Q55, 65M12}
\keywords{Semilinear Schrödinger equation, semilinear wave equation, low-regularity integrator, error analysis, Strichartz estimates, null form estimate}
\begin{document}
	\selectlanguage{english}
	\pagestyle{headings}
	\begin{abstract}
	We prove optimal convergence rates for certain low-regularity integrators applied to the one-dimensional periodic nonlinear Schrödinger and wave equations under the assumption of $H^1$ solutions. For the Schrödinger equation we analyze the exponential-type scheme proposed by Ostermann and Schratz in 2018, whereas in the wave case we treat the corrected Lie splitting proposed by Li, Schratz, and Zivcovich in 2023. We show that the integrators converge with their full order of one and two, respectively. In this situation only fractional convergence rates were previously known. The crucial ingredients in the proofs are known space-time bounds for the solutions to the corresponding linear problems. More precisely, in the Schrödinger case we use the $L^4$ Strichartz inequality, and for the wave equation a null form estimate. To our knowledge, this is the first time that a null form estimate is exploited in numerical analysis. We apply the estimates for continuous time, thus avoiding potential losses resulting from discrete-time estimates.
	\end{abstract}

	\maketitle
	
	\section{Introduction}

Due to their importance as model problems in mathematical physics, the nonlinear Schrödinger and wave equations have been intensively studied in the past decades, both analytically and numerically. In this work we study their numerical time integration in the one-dimensional cases with periodic boundary conditions. We treat the semilinear Schrödinger equation
\begin{equation}\begin{aligned} \label{NLS} \iu\partial_t u +\partial_x^2 u&=   \mu |u|^2u  , \quad (t,x) \in [0,T] \times \T, \\ 
u(0)&=u_0 \in H^1(\T),
\end{aligned}\end{equation}
where we allow for both signs $\mu \in\{\pm 1\}$. The symbol $\T = \R / \Z$ denotes the one-dimensional torus. Our second problem is the semilinear wave equation 
\begin{equation}\begin{aligned} \label{NLW2} \partial_t^2 u -\partial_x^2 u &= g(u)  , \quad (t,x) \in [0,T] \times \T, \\ 
u(0)&=u_0 \in H^1(\T), \\
\partial_tu(0)&=v_0 \in L^2(\T),
\end{aligned}\end{equation}
with a general nonlinearity $g \in C^2(\R,\R)$. Our regularity assumptions on the initial data are natural in view of the energy conservation laws. 

For the time discretization of dispersive or hyperbolic equations such as \eqref{NLS} and \eqref{NLW2}, \emph{low-regularity integrators} have recently gained a lot of attention in the literature. See, e.g., \cite{BrunedSchratz,CaoSchr,MaierhoferExplSym,KdVLowReg,KleinGordon,KdVUnfiltered,ORS,ORSBourgain,OS,OWY,RS} for some important contributions. In low-regularity settings, these tailor-made integrators can outperform more classical schemes (such as splitting methods \cite{Lubich} or classical exponential integrators \cite{Gauckler,ExpInt}) thanks to an improved local error structure which requires less regularity.

In the present paper we analyze two known schemes of this type, the first-order integrator for \eqref{NLS} from \cite{OS} and the second-order scheme for \eqref{NLW2} from \cite{KleinGordon}. We show that they converge with their full order in particular situations where previously only fractional convergence orders were known. The general outline of proof is in both cases the same. We first derive a suitable representation of the local error. This has the property that in a second step, the sum of the local error terms can be optimally estimated exploiting an equation-specific space-time inequality for the solution $u$. Here we only use the estimates for continuous time, since discrete-time estimates often involve a loss, see \cite{ORS,ORSBourgain,RuffSchnaubelt}. In the case of the wave equation we use a null form estimate, which seems to be a new tool in numerical analysis. The proof of the error bound is then completed in a classical way by a discrete Gronwall argument. 
Hence, our proof strategy is very flexible and could possibly be adapted to show error bounds also for other equations and integrators. 
In this work we only analyze the temporal semi-discretization, but expect that an extension to a fully discrete setting is possible.

\subsection{The Schrödinger case}

In the seminal paper \cite{OS}, a low-regularity integrator was proposed for the time integration of the nonlinear Schrödinger equation \eqref{NLS} (and also its higher-dimensional versions). The scheme computes approximation $u_n \approx u(n\tau)$ via
\begin{equation}\label{LRI} 
u_{n+1} = \Phi_\tau(u_n) \coloneqq e^{\iu \tau \partial_x^2}\Big(u_n-\iu\tau\mu(u_n)^2 \varphi_1(-2\iu\tau\partial_x^2) \bar u_n \Big). \end{equation}
The operator $\varphi_1(-2\iu\tau\partial_x^2)$ can be defined in Fourier space or using the functional calculus for $\varphi_1(z) = (e^z-1)/z$. For our purposes, the definition via the integral representation
\begin{equation}\label{Defphi1}
\varphi_1(-2\iu\tau\partial_x^2)f \coloneqq \frac1\tau \int_0^\tau e^{-2\iu s \partial_x^2} f \dd s
\end{equation}
for $f \in L^2(\T)$ is convenient.
The authors in \cite{OS} proved a general convergence result which in the one-dimensional case reads as follows.
\begin{Theorem}[\cite{OS}]\label{ThmOS}
	Let $r>1/2$ and $\gamma \in (0,1]$. Assume that the solution $u$ to \eqref{NLS} satisfies $u(t) \in H^{r+\gamma}(\T)$ for all $t \in [0,T]$. Then there are a constant $C>0$ and a maximum step size $\tau_0>0$ such that the approximations $u_n$ obtained by \eqref{LRI} satisfy the error bound
	\[ \|u(n\tau)-u_n\|_{H^r(\T)} \le C\tau^\gamma \]
	for all $\tau \in (0,\tau_0]$ and $n \in \N_0$ with $n\tau \le T$. The numbers $C$ and $\tau_0$ only depend on $T$ and $\|u\|_{L^\infty([0,T],H^{r+\gamma}(\T))}$.
\end{Theorem}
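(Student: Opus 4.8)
The plan is to prove convergence by the standard Lady Windermere's fan / discrete Gronwall strategy, so the heart of the matter is a careful analysis of the \emph{local error} of one step of $\Phi_\tau$ compared to the exact flow. Let $\Phi_\tau^t$ denote the exact flow of \eqref{NLS}. First I would write the exact solution over one step using Duhamel's formula in the interaction picture: setting $v(t)=e^{-\iu t\partial_x^2}u(t)$, one obtains
\begin{equation*}
u(n\tau+\tau)=e^{\iu\tau\partial_x^2}u(n\tau)-\iu\mu\int_0^\tau e^{\iu(\tau-s)\partial_x^2}\bigl(|u(n\tau+s)|^2u(n\tau+s)\bigr)\dd s.
\end{equation*}
Freezing the nonlinearity at the left endpoint (replacing $u(n\tau+s)$ by $e^{\iu s\partial_x^2}u(n\tau)$ inside the integral) and then resolving the resulting oscillatory integral in Fourier space is exactly what produces the scheme \eqref{LRI}: the cubic term $|e^{\iu s\partial_x^2}u_n|^2 e^{\iu s\partial_x^2}u_n$ contains the phase factors $e^{-2\iu s\partial_x^2}$ acting on $\bar u_n$, and integrating this against $\dd s$ over $[0,\tau]$ yields precisely the $\varphi_1(-2\iu\tau\partial_x^2)$ operator of \eqref{Defphi1}. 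I would verify this algebraic identity carefully, since it certifies that $\Phi_\tau$ is a consistent discretization.

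Next I would bound the local error $\mathcal{L}_n \coloneqq \Phi_\tau^\tau(u(n\tau))-\Phi_\tau(u(n\tau))$ in the $H^r$ norm. After subtracting the scheme, $\mathcal{L}_n$ is a difference of two integrals over $[0,\tau]$, and the integrand is the difference between the true cubic nonlinearity and its frozen (endpoint) version. The key point is that this difference is itself an integral in time of $\partial_s$ of the cubic nonlinearity along the flow, which brings down one factor of $s$ (hence an overall $\tau$) together with derivatives. Here I would use that $H^r(\T)$ with $r>1/2$ is a Banach algebra, so products of $H^r$ functions are controlled, and that the free propagator $e^{\iu t\partial_x^2}$ is an isometry on every $H^s$. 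The regularity assumption $u(t)\in H^{r+\gamma}$ supplies the $\gamma$ extra derivatives needed: the local error picks up a factor $\tau^{1+\gamma}$ because the time-derivative of the nonlinearity costs two spatial derivatives (through $\partial_x^2 u$) but these are absorbed by the oscillatory integral against $s\,\dd s$, which—combined with the available $\gamma$-regularity surplus—yields a local error of order $\tau^{1+\gamma}$ in $H^r$.

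The error then propagates via the telescoping identity
\begin{equation*}
u(n\tau)-u_n=\sum_{k=0}^{n-1}\bigl(\Phi_\tau^{(n-1-k)}\circ\Phi_\tau^\tau-\Phi_\tau^{(n-1-k)}\circ\Phi_\tau\bigr)(u(k\tau)),
\end{equation*}
so I would need a \emph{stability} (Lipschitz) estimate for the numerical flow $\Phi_\tau$ in $H^r$, uniformly on a ball containing the exact solution and its approximations. Since $\Phi_\tau$ consists of the isometry $e^{\iu\tau\partial_x^2}$ composed with a Lipschitz polynomial map in $H^r$ (again using the algebra property), one obtains a bound of the form $\|\Phi_\tau(f)-\Phi_\tau(g)\|_{H^r}\le(1+C\tau)\|f-g\|_{H^r}$ on bounded sets. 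Summing the $n$ local errors of size $\tau^{1+\gamma}$ over $n\le T/\tau$ steps gives the global bound $C\tau^\gamma$, with the $(1+C\tau)^n$ factors absorbed into the constant by a discrete Gronwall argument.

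The main obstacle, and the step requiring the most care, is the local error estimate: one must confirm that differentiating the cubic nonlinearity in time and estimating in $H^r$ really costs only $\gamma$ (not more) derivatives beyond $H^r$. This is where the improved local error structure of the Ostermann--Schratz scheme is essential—the dominant term in the naive Taylor expansion, which would require $r+2$ derivatives, is exactly the one captured by the $\varphi_1$ operator and therefore does not appear in $\mathcal{L}_n$. I would also track carefully that the maximum step size $\tau_0$ and the constant $C$ depend only on $T$ and $\|u\|_{L^\infty([0,T],H^{r+\gamma})}$, which requires an a priori bound keeping the numerical iterates $u_n$ in a fixed ball, typically established inductively alongside the error estimate.
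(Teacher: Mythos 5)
Your high-level skeleton (interaction-picture Duhamel formula, freezing of the nonlinearity, a local error of order $\tau^{1+\gamma}$, stability of $\Phi_\tau$ in $H^r$ via the algebra property, Lady Windermere's fan with discrete Gronwall) is indeed the structure of the proof in \cite{OS}, which the present paper cites rather than reproves. But your central claim --- that freezing $u(n\tau+s)\approx e^{\iu s\partial_x^2}u(n\tau)$ and then integrating exactly ``yields precisely'' the scheme \eqref{LRI} --- is false, and the step you defer (``I would verify this algebraic identity carefully'') cannot be carried out. Write $k_1,k_2$ for the frequencies of the two copies of $u_n$ and $k_3$ for the frequency carried by $\bar u_n$. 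A direct Fourier computation shows that the frozen Duhamel integrand $e^{\iu(\tau-s)\partial_x^2}\bigl[(e^{\iu s\partial_x^2}u_n)^2\,\overline{e^{\iu s\partial_x^2}u_n}\bigr]$ carries the $s$-dependent phase $e^{2\iu s(k_1+k_3)(k_2+k_3)}$, whereas the integrand $e^{\iu\tau\partial_x^2}\bigl[u_n^2\,e^{-2\iu s\partial_x^2}\bar u_n\bigr]$ underlying \eqref{LRI} and \eqref{Defphi1} carries only the phase $e^{2\iu s k_3^2}$. The two differ by the factor $e^{2\iu s(k_1k_2+k_1k_3+k_2k_3)}$, which couples all three input frequencies; consequently the frozen integral is \emph{not} expressible through $\varphi_1(-2\iu\tau\partial_x^2)$ acting on $\bar u_n$ alone, and the scheme involves a second, genuinely lossy approximation (replacing the mixed phases by $1$). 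This is exactly the step your proposal omits. A sanity check confirms that something must be wrong: if your identity held, the only local error would be the freezing error, which by Duhamel and the algebra property is $O(\tau^2)$ in $H^r$ with \emph{no} extra regularity; you would then conclude first-order convergence for $u\in C([0,T],H^r)$, a result far stronger than Theorem \ref{ThmOS} that would make Theorem \ref{ThmLRI} of the present paper, as well as the analyses in \cite{ORS,ORSBourgain}, pointless.

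What is missing is precisely the mechanism through which $\gamma$ and the hypothesis $u(t)\in H^{r+\gamma}$ enter. In \cite{OS} the dropped phase is estimated via $|e^{\iu\theta}-1|\lesssim|\theta|^\gamma$ for $\gamma\in(0,1]$ together with the elementary bound $|k_1k_2+k_1k_3+k_2k_3|^\gamma \lesssim |k_1|^\gamma|k_2|^\gamma + |k_1|^\gamma|k_3|^\gamma + |k_2|^\gamma|k_3|^\gamma$: each summand places $\gamma$ extra derivatives on two \emph{distinct} factors, never two full derivatives on a single factor. Combined with bilinear estimates in $H^r$, $r>1/2$, this gives a local error $\lesssim \tau^{1+\gamma}$ with constant depending on $\|u\|_{L^\infty([0,T],H^{r+\gamma})}$, which is the heart of the theorem; the dominant square phase $2sk_3^2$, which would cost two derivatives on one factor, is the part that is integrated exactly by $\varphi_1$. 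Your description of this step (the two derivatives from $\partial_x^2u$ being ``absorbed by the oscillatory integral against $s\,\dd s$'') does not identify this frequency-splitting argument, and it is moreover inconsistent with your earlier claim that the frozen integral reproduces the scheme exactly. The remaining ingredients of your proposal --- the $(1+C\tau)$-Lipschitz stability of $\Phi_\tau$ on bounded sets of $H^r$, the inductive a priori control of the iterates, and the discrete Gronwall conclusion --- are correct and do match \cite{OS}.
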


We will later make use of Theorem \ref{ThmOS} since it provides an a-priori bound in $L^\infty$ for the numerical solution $u_n$ if $\tau$ is small enough.
The condition $r>1/2$ in Theorem \ref{ThmOS} arises from the use of the algebra property of the Sobolev space $H^r(\T)$. The space $L^2(\T)$ does not have this property, so it is a natural question if Theorem \ref{ThmOS} still holds if $r = 0$ and $\gamma = 1$. In this case, the numerical experiment does not give a clear picture concerning the convergence rate, cf.\ our discussion in Section \ref{SecNumExp}. In the work \cite{ORS}, the problem \eqref{NLS} was considered with spatial domain $\R^d$, $d \in \{1,2,3\}$. The difficulty in the error analysis of the scheme \eqref{LRI} is that its local error is roughly of the form
\[ \tau^2 |\partial_xu|^2u, \]
cf.\ p.731 of \cite{ORS}. Estimating this is $L^2$ for fixed times would require that $u\in W^{1,4}$, which is not covered by the assumption $u \in H^1$. It is however known that solutions to dispersive equations such as \eqref{NLS} enjoy better integrability properties in space if we also involve integration in the time variable. This is formalized using \emph{Strichartz estimates}, which control mixed space-time $L^pL^q$ norms of solutions to linear dispersive equations in terms of the data, cf.\ Chapter 2.3 of \cite{Tao2006}. 
In \cite{ORS}, the authors proved discrete-time Strichartz estimates and used them to show fractional convergence rates (strictly between $1/2$ and $1$ depending on the dimension) in $L^2$ for a frequency-filtered variant of \eqref{LRI}. In the case $d=1$, the convergence rate was $5/6$. In the subsequent paper \cite{ORSBourgain}, the authors analyzed the problem \eqref{NLS} on the torus $\T$. They introduced \emph{discrete Bourgain spaces} and used them to prove a convergence rate of almost $7/8$ for a significantly refined frequency-filtered variant of \eqref{LRI}. 

In \cite{ORS,ORSBourgain} the optimal first-order convergence could not be reached {since} the discrete-time Strichartz and Bourgain space estimates only hold for frequency localized functions {and} contain a multiplicative loss depending on $K\tau^{1/2}$, where $K$ is the largest frequency and $\tau$ denotes the time step-size.
{This is because the regularizing effect described by Strichartz estimates only exists when time is averaged, whereas for fixed times, the Sobolev embedding is generally sharp. Hence, the constant in the discrete-time Strichartz estimates (that by construction contain fixed-time estimates) has to blow up in the limit $K \to \infty$. For scaling reasons, the loss depending on $K\tau^{1/2}$ is optimal, cf.\ the analogous discussion in the case of the wave equation after Theorem 2.3 of \cite{R}. In order to apply the discrete-time estimates to control the terms stemming from the local error, it is tempting to employ the CFL-type condition $K=\tau^{-1/2}$ to avoid this loss. However, as explained in \cite{ORS,ORSBourgain}, the choice $K=\tau^{-1/2}$ would only lead to a global error bound of order $1/2$ since the error caused by frequency filtering is of order $K^{-1}$ if the error is measured in $L^2$ for $H^1$ initial data. In order to balance the contributions of the error terms, the works \cite{ORS,ORSBourgain} proposed to take $K = \tau^{-\alpha/2}$ for a suitable choice of $\alpha>1$, leading to the global error bounds of fractional order between $1/2$ and $1$ mentioned above.}

In this work we extend Theorem \ref{ThmOS} to the case $r=0$ and $\gamma=1$ with optimal first-order convergence. In contrast to \cite{ORS,ORSBourgain}, we do not use frequency filtering and discrete-time Strichartz or Bourgain space estimates. Instead, we derive an error representation which allows us to apply the continuous-time periodic Strichartz estimate
\begin{equation}\label{eq:strich}
	\|e^{\iu t \partial_x^2}f\|_{L^4([0,T] \times \T)} \lesssim_T \|f\|_{L^2(\T)}.
\end{equation}
{This estimate does not suffer from the disadvantages of the discrete-time estimates that were described above.}
A proof of \eqref{eq:strich} can be found in Theorem 1 and the subsequent remark of \cite{Zygmund} or Proposition 2.1 of \cite{Bourgain93}. 

The idea to use continuous-time Strichartz estimates to control the local error goes back to \cite{IgnatSplitting}. {That the avoidance of frequency filtering can lead to improved error bounds was already observed in the context of the Korteweg--de Vries (KdV) equation in \cite{KdVUnfiltered}, where the error analysis is based on different tools such as trilinear estimates for the KdV operator and continuation of the numerical solution into a function on $[0,T]$.}

Our convergence result in $L^2$ reads as follows. Its proof is carried out in Section \ref{SecSchr}.

\begin{Theorem}\label{ThmLRI}
	Assume that the solution $u$ to \eqref{NLS} satisfies $u(t) \in H^1(\T)$ for all $t \in [0,T]$. Then there are a constant $C>0$ and a maximum step size $\tau_0>0$ such that the approximations $u_n$ obtained by \eqref{LRI} satisfy the error bound
	\[ \|u(n\tau)-u_n\|_{L^2(\T)} \le C\tau \]
	for all $\tau \in (0,\tau_0]$ and $n \in \N_0$ with $n\tau \le T$. The numbers $C$ and $\tau_0$ only depend on $T$ and $\|u\|_{L^\infty([0,T],H^1(\T))}$.
\end{Theorem}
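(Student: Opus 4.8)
The plan is to prove convergence by a standard Lady Windermere / discrete Gronwall scheme, but with the crucial twist that the accumulated local error is estimated in a summed, space-time fashion rather than pointwise in time, so that the Strichartz inequality \eqref{eq:strich} can absorb the derivative loss. Let me first fix notation by writing $u^n = u(n\tau)$ for the exact solution sampled at grid points and $u_n$ for the numerical approximation. I would split the global error using the standard telescoping identity
\begin{equation*}
u^{n} - u_n = \sum_{k=0}^{n-1} \Phi_\tau^{\,n-1-k}\bigl(\Phi_\tau(u^{k}) - u^{k+1}\bigr) + \text{(stability terms)},
\end{equation*}
so that the global error is governed by the sum of the \emph{propagated local errors}, where the local error is $\mathcal{L}^k \coloneqq u^{k+1} - \Phi_\tau(u^k) = u((k+1)\tau) - \Phi_\tau(u(k\tau))$. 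The first main step is therefore to derive a \emph{favorable representation} of $\mathcal{L}^k$: expanding both the exact flow (via Duhamel's formula) and the numerical flow \eqref{LRI}, and using the integral representation \eqref{Defphi1} of $\varphi_1$, one isolates the leading error term, which as noted in the excerpt is morally of size $\tau^2|\partial_x u|^2 u$. The key is to write this not as something to be bounded pointwise in $t$, but as (a time-integral of) a product $e^{\iu s\partial_x^2}(\cdots) \cdot e^{\iu s'\partial_x^2}(\cdots)$ of linearly-propagated factors, because those are exactly the objects controlled by \eqref{eq:strich}.

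The second, and genuinely decisive, step is the summed estimate. Rather than bounding each $\|\mathcal{L}^k\|_{L^2(\T)}$ by $\tau^2$ (which would demand $u \in W^{1,4}$ and fail under mere $H^1$ regularity), I would estimate the \emph{whole sum} $\sum_{k} \|\mathcal{L}^k\|_{L^2(\T)}$ at once. After pulling out one factor of $\tau$ and the harmless unitary factors $e^{\iu\tau\partial_x^2}$, the remaining piece should, after Cauchy--Schwarz in $s$, reduce to estimating quantities like $\int_0^\tau \|e^{\iu s\partial_x^2}\partial_x u(k\tau)\|_{L^4}^2 \dd s$ summed over $k$. The point is that $\sum_k \int_{k\tau}^{(k+1)\tau}$ reassembles into an integral over $[0,T]$, and $e^{\iu s\partial_x^2}\partial_x u$ evaluated along the \emph{exact} linear evolution is precisely where \eqref{eq:strich} applies: writing $\partial_x u(k\tau)$ as $e^{-\iu k\tau\partial_x^2}$ applied to a fixed $H^1$ function (up to controlled nonlinear corrections), the Strichartz bound turns the accumulated $L^2L^4$ norm into a single $L^2(\T)$ norm of $\partial_x u_0$, i.e.\ into $\|u\|_{H^1}$. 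This is the mechanism by which continuous-time Strichartz converts the missing derivative into space-time integrability and yields the full factor $\tau$ (one power lost to the sum, one retained from $\tau^2$) without any fractional loss.

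The third step handles the difference between propagating the exact versus the frozen solution inside the nonlinear local-error terms: replacing $u(k\tau)$ by $e^{-\iu s\partial_x^2}(\text{data})$ introduces commutator-type remainders, and I would control these by standard bilinear or trilinear estimates in $L^2$, using the $L^\infty$-in-time $H^1$ bound on $u$ together with the a-priori $L^\infty$ bound on $u_n$ furnished by Theorem \ref{ThmOS} (applied with, say, $r+\gamma$ slightly above $1/2$, which is available since $u \in H^1 \subset H^{r+\gamma}$). These remainders should be genuinely of order $\tau^2$ per step and hence $O(\tau)$ after summation, so they do not spoil the rate. Finally, the stability of the scheme — Lipschitz continuity of $\Phi_\tau$ on $L^2$-balls of bounded-$H^1$ functions, again using the uniform bounds from Theorem \ref{ThmOS} — lets me apply a discrete Gronwall inequality to the telescoped error, closing the estimate at $\|u^n - u_n\|_{L^2} \le C\tau$.

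I expect the main obstacle to be the second step: arranging the local-error representation so that the derivative-losing factor $\partial_x u$ sits \emph{inside} a clean linear propagator $e^{\iu s\partial_x^2}$ to which \eqref{eq:strich} can be applied \emph{after} summation over $k$. The subtlety is that Strichartz is an estimate for the free evolution of a single fixed datum over $[0,T]$, whereas the naive sum produces the free evolution restarted at each step from the (nonlinearly evolving) $u(k\tau)$; reconciling these — essentially showing that $\sum_k \int_0^\tau \|e^{\iu s\partial_x^2}\partial_x u(k\tau)\|_{L^4}^2\dd s$ is controlled by $\|e^{\iu t\partial_x^2}\partial_x u_0\|_{L^4([0,T]\times\T)}^2$ up to nonlinear corrections — is where the real work lies, and it is precisely the step that fails in the discrete-time framework of \cite{ORS,ORSBourgain} but succeeds in continuous time.
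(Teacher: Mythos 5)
Your overall frame (telescoping, summed space-time estimation of the local error, Strichartz on continuous time, Theorem \ref{ThmOS} for the a-priori $L^\infty$ bound, discrete Gronwall) matches the paper's strategy, but there is a genuine gap in your decisive second step, and it sits exactly where you did not expect it. The local error of \eqref{LRI} unavoidably contains a mixed-derivative term of the form
\begin{equation*}
D_3(\sigma,s) = -4\, u(\sigma)\,\partial_x u(\sigma)\; e^{2\iu(\sigma-s)\partial_x^2}\partial_x \bar u(\sigma),
\qquad t_k \le \sigma \le s \le t_{k+1},
\end{equation*}
in which one derivative factor is locked inside a propagator whose time parameter $2(\sigma-s)$ ranges only over a window of length $O(\tau)$. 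Your plan is to bound the \emph{sum of $L^2$ norms} of the local errors, putting both derivative factors in $L^4_x$ by Hölder ($L^\infty\cdot L^4\cdot L^4 \to L^2$) and then invoking \eqref{eq:strich} after reassembling the sum over $k$. For $\partial_x u(\sigma)$ this works, but for the factor $e^{2\iu(\sigma-s)\partial_x^2}\partial_x\bar u(\sigma)$ it does not: Strichartz controls its $L^4$-in-$(s,x)$ norm over the short window only by $\|\partial_x u(\sigma)\|_{L^2}$ with \emph{no} smallness in $\tau$, so integrating in $s$ over a window of length $\tau$ gains only $\tau^{3/4}$ (Hölder $L^1_s \to \tau^{3/4} L^4_s$), not $\tau$. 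Carrying this through yields a bound of order $\tau^{3/4}$ for this term --- a fractional rate, worse even than the $5/6$ of \cite{ORS} --- and no rearrangement of Cauchy--Schwarz in $s$ repairs it. The correct mechanism cannot go through the triangle inequality over $k$ at all.

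What the paper does instead (Lemma \ref{LemSumErrSchr}) is to keep the propagators $e^{\iu(n-k-1)\tau\partial_x^2}$ \emph{inside} the norm, reassemble the sum by Fubini into a single Duhamel-type integral
$\int_0^{n\tau} e^{\iu(n\tau-\sigma)\partial_x^2} \int_\sigma^{\lceil\sigma/\tau\rceil\tau} D_3(\sigma,s)\dd s \dd\sigma$,
and then apply the \emph{dual} (adjoint) Strichartz estimate
$\bigl\|\int_0^T e^{-\iu t \partial_x^2}F(t)\dd t\bigr\|_{L^2} \lesssim_T \|F\|_{L^{4/3}([0,T]\times\T)}$,
which your proposal never invokes. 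Measuring $D_3$ in $L^{4/3}_x$ (via $L^\infty\cdot L^4\cdot L^2 \to L^{4/3}$) lets the short-time propagator factor stay in $L^2_x$, where it is \emph{unitary}; the $s$-integration then yields the full factor $\tau$, and only the single factor $\partial_x u(\sigma)$ needs the space-time $L^4$ bound. That bound, in turn, is the paper's Corollary \ref{CorStrichu}: $\|\partial_x u\|_{L^4([0,T]\times\T)}\lesssim_{M,T}1$, proved by one application of \eqref{eq:strich} and Minkowski to Duhamel's formula for the exact solution. This also shows that the obstacle you flagged as ``where the real work lies'' --- reconciling restarted free evolutions $e^{\iu s\partial_x^2}u(k\tau)$ with a single-datum Strichartz bound --- is in fact the easy part and is best avoided altogether: the paper deliberately does \emph{not} insert free-evolution approximations of $u$, keeping the exact solution $u(\sigma)$ in the error representation (Lemma \ref{LemLokFSchr}) so that the sum over $k$ reassembles directly into $\int_0^T$ of quantities controlled by Corollary \ref{CorStrichu}. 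To fix your proof you would need both ingredients: the error representation with $u(\sigma)$ kept exact, and the dual Strichartz estimate applied to the propagated sum for the $D_3$-type term.
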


We comment on possible extensions of Theorem \ref{ThmLRI} to higher dimensions. The embedding $H^1 \hookrightarrow L^\infty$ as well as the estimate \eqref{eq:strich}, which are both crucially exploited in the proof of Theorem \ref{ThmLRI}, are then wrong, in general. In two dimensions, they however both only require an arbitrary small amount of extra regularity (see Proposition 3.6 of \cite{Bourgain93} for the 2D version of \eqref{eq:strich}). Therefore, it is possible to extend Theorem \ref{ThmLRI} to the 2D case under the slightly stronger regularity assumption ${u_0} \in H^{1+\eps}$ for some $\eps >0$. One could also stick to the $H^1$ assumption if one considers a suitably filtered variant of \eqref{LRI} and lowers the convergence rate by $\eps$. The three-dimensional case seems to be more difficult and we do not know how the optimal result then looks like. The situation becomes easier if the torus $\T^d$ is replaced by the full space $\R^d$, since then a wider range of Strichartz estimates becomes applicable, cf.\ Chapter 2.3 of \cite{Tao2006}. 

The analysis of the present paper could also be adapted to show that the second-order scheme for \eqref{NLS} proposed in \cite{BrunedSchratz,OWY,RS} is second-order convergent for $H^2$ initial data. 
In a similar spirit, it seems possible to extend our analysis to the symmetrized two-step methods that were recently proposed in \cite{MaierhoferExplSym}.

\subsection{The wave case}

For the nonlinear wave equation \eqref{NLW2}, the authors in \cite{KleinGordon} proposed a low-regularity integrator which was called the \emph{corrected Lie splitting}. It computes approximations $(u_n,v_n) \approx (u(n\tau),\partial_tu(n\tau))$ via
\begin{equation}\label{CorrLie}
\begin{pmatrix}u_{n+1} \\ v_{n+1}\end{pmatrix} = e^{\tau A} \Bigg[\begin{pmatrix}u_n \\ v_n\end{pmatrix} +\tau\begin{pmatrix}0\\ g(u_n)\end{pmatrix} + \tau^2 \varphi_2(-2\tau A)\begin{pmatrix}-g(u_n) \\ g'(u_n)v_n\end{pmatrix} \Bigg], 
\end{equation}
with wave operator {$A(u,v)^\top = (v,\partial_x^2 u)^\top$}.
The operator $\varphi_2(-2\tau A)$ is defined by the integral representation
\begin{equation}\label{Defphi2}
\varphi_2(-2\tau A)w \coloneqq \frac{1}{\tau^2}\int_0^\tau (\tau-s) e^{-2s A} w \dd s 
\end{equation}
for $w \in H^1 \times L^2$. Similar as in the Schrödinger case, one could equivalently use the functional calculus for $\varphi_2(z) = (e^z-z-1)/z^2$.
In \cite{KleinGordon}, under a Lipschitz condition on the nonlinearity $g$, it was shown that the scheme \eqref{CorrLie} converges with order $2$ in $H^1 \times L^2$ under the regularity assumption ${(u_0,v_0)} \in H^{1+d/4} \times H^{d/4}$ for spatial dimensions $d \in \{1,2,3\}$. The reason for this additional regularity requirement is that the main part of the local error is roughly of the form
\begin{equation}\label{eq:nullcond}
\|(\partial_tu)^2-\nabla u \cdot \nabla u\|_{L^2(\T^d)},
\end{equation}
cf.\ equation (2.26) of \cite{KleinGordon}. This term was then estimated (for fixed times) using the triangle inequality and the Sobolev embedding $H^{d/4}(\T^d) \hookrightarrow L^4(\T^d)$. 
For the one-dimensional case $d=1$, the authors in \cite{KleinGordon} also gave a convergence result under the weaker regularity assumption ${(u_0,v_0)} \in H^1 \times L^2$. Using an interpolation argument, it was shown that the scheme \eqref{CorrLie} converges almost with order $4/3$ in $H^1 \times L^2$. However, the numerical experiments in \cite{KleinGordon} suggested that the convergence is of order $2$ also in this case. {We also refer to \cite{DiscSol} for a fully discrete error analysis of a frequency-filtered version of the corrected Lie splitting at the even lower regularity $(u_0,v_0) \in H^\gamma \times H^{\gamma-1}$ for $\gamma>0$ in dimensions $d \in \{1,2\}$, where reduced convergence rates were shown under a CFL-type condition.}

Here, we give a rigorous proof of {the} second-order convergence {observed in \cite{KleinGordon} for $H^1 \times L^2$ initial data in one spatial dimension}. In contrast to the Schrödinger case, the 1D wave equation does not exhibit dispersive behavior. Instead, the idea is to exploit that the expression \eqref{eq:nullcond} contains a so-called \emph{null form} which allows for improved  space-time bounds compared to the above fixed-time approach. Such null form estimates are widely used in the analysis of nonlinear wave equations, cf.\ \cite{KlainermanMachedon} or {Chapter 6} of \cite{Tao2006}. They rely on cancellation of parallel interactions (where waves move together) in the bilinear expression in \eqref{eq:nullcond}. In the one-dimensional case one has the following estimate. If $\phi$ solves the linear inhomogeneous wave equation 
$\partial_t^2 \phi - \partial_x^2 \phi = F$
on $[0,T] \times \T$, then one has the inequality
\begin{align}\label{eq:nullformest} &\|(\partial_t\phi)^2-(\partial_x\phi)^2\|_{L^2([0,T] \times \T)} \nonumber\\
&\lesssim_T \|\partial_x\phi(0)\|^2_{L^2(\T)} + \|\partial_t\phi(0)\|^2_{L^2(\T)} + \|F\|^2_{L^1([0,T],L^2(\T))}. 
\end{align}
Note that the right-hand side of \eqref{eq:nullformest} only contains the $L^2$ norm of $\partial_{t,x} \phi(0)$ instead of the $L^4$ norm that would result from the triangle inequality approach. If we replace $\T$ with $\R$, estimate \eqref{eq:nullformest} can be found in (1.8) of \cite{KlainermanMachedon} (in a simplified form) or in (6.29) of \cite{Tao2006}. For convenience, we give a direct proof of \eqref{eq:nullformest} on $\T$ based on d'Alembert's formula at the beginning of Section \ref{SecWave}.

With the help of estimate \eqref{eq:nullformest}, we are able to show the following improved error bound for the corrected Lie splitting \eqref{CorrLie}. The proof is given in Section \ref{SecWave}. To our knowledge, this is the first time that a null form estimate like \eqref{eq:nullformest} is used in numerical analysis.
\begin{Theorem}\label{ThmCorrLie}
	Assume that the solution $u$ to \eqref{NLW2} satisfies $(u(t), \partial_tu(t)) \in H^1(\T) \times L^2(\T)$ for all $t \in [0,T]$. Then there are a constant $C>0$ and a maximum step size $\tau_0>0$ such that the approximations $(u_n,v_n)$ obtained by \eqref{CorrLie} satisfy the error bound
	\[ \|u(n\tau)-u_n\|_{H^1(\T)} + \|\partial_tu(n\tau)-v_n\|_{L^2(\T)} \le C\tau^2 \]
	for all $\tau \in (0,\tau_0]$ and $n \in \N_0$ with $n\tau \le T$. The numbers $C$ and $\tau_0$ only depend on $g$, $T$, $\|u\|_{L^\infty([0,T],H^1(\T))}$, and $\|\partial_tu\|_{L^\infty([0,T],L^2(\T))}$.
\end{Theorem}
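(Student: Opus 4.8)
The plan is to follow the general three-step strategy advertised in the introduction: derive a representation of the local error, sum the local errors using the null form estimate for continuous time, and close the argument with a discrete Gronwall inequality. Concretely, I would first rewrite the exact solution using the variation-of-constants (Duhamel) formula for the wave operator $A$, namely $(u(t_{n+1}),\partial_t u(t_{n+1})) = e^{\tau A}(u(t_n),\partial_t u(t_n)) + \int_0^\tau e^{(\tau-s)A}(0,g(u(t_n+s)))\dd s$, and then Taylor-expand the nonlinearity $g(u(t_n+s))$ around $s=0$ to second order. The scheme \eqref{CorrLie} is designed so that the constant and linear terms of this expansion are reproduced exactly; using the definition \eqref{Defphi2} of $\varphi_2$ one checks that the correction term $\tau^2\varphi_2(-2\tau A)(-g(u_n),g'(u_n)v_n)$ matches the first-order Taylor term of the integral. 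The upshot should be a local error defect $\delta_{n+1}$ whose leading contribution, after inserting $\partial_t^2 u = \partial_x^2 u + g(u)$ and $\partial_t(g(u)) = g'(u)\partial_t u$, involves the second time-derivative of $g(u)$, i.e.\ a term of the shape $g''(u)\big((\partial_t u)^2\big)$ together with $g'(u)\partial_t^2u$; after cancellation the genuinely problematic piece is exactly the null form $(\partial_t u)^2 - (\partial_x u)^2$ appearing in \eqref{eq:nullcond}.

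Once the local error is in this form, the key step is to \emph{not} estimate each $\delta_{n+1}$ individually in $H^1\times L^2$ (which would cost the extra $H^{1/4}$ regularity via $H^{1/4}\hookrightarrow L^4$), but rather to propagate the local errors through the flow and sum them first. Writing the global error $e_n = (u(t_n),\partial_t u(t_n)) - (u_n,v_n)$, a standard Lady Windermere's fan decomposition gives $e_n$ as a sum of propagated local errors plus the accumulated effect of the Lipschitz part of the nonlinearity. I would split $\delta_{n+1}$ into a benign part, controlled by $\|u\|_{H^1}$ and $\|\partial_t u\|_{L^2}$ directly (using $H^1\hookrightarrow L^\infty$ in one dimension so that $g(u)$, $g'(u)$, $g''(u)$ are bounded), and the null-form part. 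For the null-form part I would apply \eqref{eq:nullformest} with $\phi = u$ and $F = g(u)$: crucially, since $e^{(\tau-s)A}$ is unitary on $H^1\times L^2$, summing the propagated null-form defects over $n$ and using the triangle/Minkowski inequality reduces the total contribution to the single space-time norm $\|(\partial_t u)^2-(\partial_x u)^2\|_{L^2([0,T]\times\T)}$, which by \eqref{eq:nullformest} is bounded in terms of $\|\partial_{t,x}u(0)\|_{L^2}^2$ and $\|g(u)\|_{L^1([0,T],L^2)}^2$, both finite under the $H^1\times L^2$ assumption. This is where the gain of a full power of $\tau$ comes from: the sum $\sum_n \tau^2 \cdot(\text{local null form})$ telescopes into $\tau^2$ times a single bounded space-time quantity rather than $n\tau^2 \sim \tau$ times a fixed-time $L^4$ bound.

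The discrete Gronwall step is then classical: the remaining Lipschitz-type terms produce a contribution of the form $C\tau\sum_{k\le n}\|e_k\|_{H^1\times L^2}$, and combined with the $O(\tau^2)$ defect total this yields $\|e_n\|_{H^1\times L^2}\le C\tau^2 e^{CT}$ for $t_n\le T$. Here I would use the a-priori $L^\infty$-boundedness of the numerical solution $u_n$ (obtained, as in the Schrödinger case via Theorem \ref{ThmOS}, from the already-known fractional-order convergence result of \cite{KleinGordon} for small $\tau$) to ensure the nonlinearity and its derivatives stay uniformly bounded along the numerical trajectory; this is what makes $\tau_0$ and $C$ depend only on $g$, $T$, and the stated norms of $u$.

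The main obstacle I anticipate is the bookkeeping in the local error expansion: one must verify that the quadratic correction term in \eqref{CorrLie} cancels everything except the null-form combination, and in particular that the $g'(u)\partial_t^2 u = g'(u)(\partial_x^2 u + g(u))$ piece either cancels or can be absorbed into the benign part. The term $g'(u)\partial_x^2 u$ is awkward because $\partial_x^2 u$ is only in $H^{-1}$ under the $H^1$ assumption; I expect it must combine with a matching term from $\varphi_2$ so that only the first spatial-derivative null form survives, or else be handled by integration by parts within the Duhamel integral so that at most one derivative falls on each factor. Getting this cancellation exactly right, so that the residual is genuinely of the null-form type $(\partial_t u)^2-(\partial_x u)^2$ amenable to \eqref{eq:nullformest}, is the delicate part of the proof; the summation and Gronwall steps are comparatively routine.
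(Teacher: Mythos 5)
Your proposal is correct in outline and follows essentially the same route as the paper: a local-error representation whose residual contains only the null form, summation of the propagated local errors into a single space-time norm controlled by the continuous-time estimate \eqref{eq:nullformest} applied with $F=g(u)$ (Corollary \ref{CorNullEstu}), and a discrete Gronwall argument. The ``delicate part'' you flag resolves exactly as you hope: the paper packages the computation as the identity \eqref{HDiffEq}, $\tfrac{\mathrm d}{\mathrm ds}H(U(s)) = -AH(U(s))+B(U(s))$ with $H(U)=(-g(u),\,g'(u)\partial_t u)$, in which the awkward term $g'(u)\partial_x^2 u$ arising from $g'(u)\partial_t^2 u$ cancels against the same term inside $\partial_x^2 g(u)$ (the second component of $-AH(U)$), so that $B(U)$ consists precisely of $g''(u)[(\partial_t u)^2-(\partial_x u)^2]+g'(u)g(u)$; see \eqref{DefB} and Lemma \ref{LemmLokErrCorrLie}, and no integration by parts is needed. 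The one place you genuinely deviate is the closing step: you propose importing the fractional-order convergence result of \cite{KleinGordon} to obtain an a-priori bound on the numerical solution (mirroring the paper's use of Theorem \ref{ThmOS} in the Schr\"odinger case), whereas the paper's wave proof is self-contained --- it derives the recursion $\|E_n\|_{H^1\times L^2} \le c\tau^2 + \tau\sum_{k<n}K(\|E_k\|_{H^1\times L^2})\|E_k\|_{H^1\times L^2}$ with an increasing function $K$, sets $\tau_0=(ce^{K(1)T})^{-1/2}$, and shows by induction that $\|E_n\|_{H^1\times L^2}\le c\tau^2e^{K(1)T}\le 1$, so no external a-priori bound is required. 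Both closings work; the paper's bootstrap is cleaner since it avoids any reliance on the hypotheses and conclusions of the earlier fractional-rate result.
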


The higher-dimensional versions of the null form estimate \eqref{eq:nullformest} require more regularity, cf.\ \cite{KlainermanMachedon} or inequality (6.29) of \cite{Tao2006}. In two dimensions, they could possibly still be used to show an analogue of Theorem \ref{ThmCorrLie} with a convergence rate greater than one under a suitable growth condition on $g$. Very recently, convergence rates for a Strang splitting scheme for the 3D semilinear wave equation with power nonlinearity under the assumption ${(u_0,v_0)} \in H^1 \times L^2$ were obtained in \cite{R}. We do not know whether in this situation it is possible to show higher rates by using a low-regularity integrator instead. 

\medskip

\textbf{Notation.} We use the notation $A \lesssim_\gamma B$ if there is a constant $c>0$ (depending on quantities $\gamma$) such that $A \le cB$. If it is clear from the context, we often abbreviate the Sobolev spaces $H^r = H^r(\T)$ as well as the Lebesgue spaces $L^p = L^p(\T)$. 
{The symbol $\N_0$ denotes the set of natural numbers including zero.}
For a step size $\tau >0$ and a number $n \in \N_0$, the discrete times are denoted by $t_n \coloneqq n\tau$. 
\section{Proof of the result for the nonlinear Schrödinger equation}\label{SecSchr}

In this section we prove Theorem \ref{ThmLRI}. We start by converting the linear estimate \eqref{eq:strich} into a bound for the solution $u$ to the nonlinear problem \eqref{NLS}.

\begin{Assumption}\label{AssSchr}
	There exists a time $T>0$ and a solution $u \in C([0,T], H^1) \cap C^1([0,T],H^{-1})$ to the nonlinear Schrödinger equation \eqref{NLS} 
	with bound
	\begin{equation*}\label{DefMSchr}
	M \coloneqq \|u\|_{L^\infty([0,T], H^1)}.
	\end{equation*}
\end{Assumption}

\begin{Corollary}\label{CorStrichu}
	Let $u$, $T$, and $M$ be given by Assumption \ref{AssSchr}. Then we have the estimate
	\begin{equation*}
	\|\partial_x u\|_{L^4([0,T] \times \T)}  \lesssim_{M,T} 1. 
	\end{equation*}
\end{Corollary}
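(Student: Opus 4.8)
The plan is to transfer the linear Strichartz estimate \eqref{eq:strich} to the nonlinear solution $u$ via Duhamel's formula. Since $u$ solves \eqref{NLS}, it satisfies
\[ u(t) = e^{\iu t \partial_x^2}u_0 - \iu\mu \int_0^t e^{\iu(t-s)\partial_x^2}\bigl(|u(s)|^2u(s)\bigr) \dd s. \]
As $\partial_x$ commutes with the Schrödinger group $e^{\iu t\partial_x^2}$, applying it yields
\[ \partial_x u(t) = e^{\iu t \partial_x^2}\partial_x u_0 - \iu\mu \int_0^t e^{\iu(t-s)\partial_x^2}\partial_x\bigl(|u(s)|^2u(s)\bigr) \dd s. \]
I would then take the $L^4([0,T]\times\T)$ norm and treat the two contributions separately.

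The linear term is immediate: by \eqref{eq:strich} with $f = \partial_x u_0$ we get $\|e^{\iu t \partial_x^2}\partial_x u_0\|_{L^4([0,T]\times\T)} \lesssim_T \|\partial_x u_0\|_{L^2} \le M$. For the Duhamel term, the idea is to pull the $L^4$ norm inside the time integral by Minkowski's inequality and then exploit that \eqref{eq:strich} is invariant under shifting the time origin. Writing $F(s) \coloneqq \partial_x\bigl(|u(s)|^2u(s)\bigr)$ and inserting the indicator $\mathbf 1_{\{s\le t\}}$, I obtain
\[ \Bigl\| \int_0^t e^{\iu(t-s)\partial_x^2}F(s) \dd s \Bigr\|_{L^4([0,T]\times\T)} \le \int_0^T \bigl\| e^{\iu(\cdot-s)\partial_x^2}F(s) \bigr\|_{L^4([s,T]\times\T)} \dd s \lesssim_T \int_0^T \|F(s)\|_{L^2} \dd s, \]
where the last step uses \eqref{eq:strich} applied on $[0,T-s]$ after the substitution $\sigma = t-s$.

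It remains to bound $\|F(s)\|_{L^2}$ uniformly in $s$. By the product rule $\partial_x(|u|^2u) = 2|u|^2\partial_x u + u^2\partial_x\bar u$, so $\|F(s)\|_{L^2} \lesssim \|u(s)\|_{L^\infty}^2\|\partial_x u(s)\|_{L^2}$. Using the one-dimensional Sobolev embedding $H^1(\T)\hookrightarrow L^\infty(\T)$ together with Assumption \ref{AssSchr} gives $\|u(s)\|_{L^\infty} \lesssim \|u(s)\|_{H^1}\le M$ and $\|\partial_x u(s)\|_{L^2}\le M$, hence $\|F(s)\|_{L^2}\lesssim M^3$. Combining all estimates yields $\|\partial_x u\|_{L^4([0,T]\times\T)} \lesssim_T M + TM^3 \lesssim_{M,T} 1$. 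The only point requiring genuine care is the Duhamel step: one must justify the Minkowski interchange and the time-translation argument so that the inhomogeneous evolution inherits the Strichartz bound with a constant depending only on $T$, since the embedding and the product estimate for the source term are otherwise routine in one dimension.
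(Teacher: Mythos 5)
Your proposal is correct and follows essentially the same route as the paper: apply the Strichartz estimate \eqref{eq:strich} to Duhamel's formula, use Minkowski's inequality (with the time-translation argument you spell out) for the inhomogeneous term, and control $\|\partial_x(|u|^2u)(s)\|_{L^2}$ via the product rule and the embedding $H^1(\T)\hookrightarrow L^\infty(\T)$. The paper's proof is just a more condensed version of the same argument, with the justification of the Minkowski--Strichartz step for the Duhamel integral left implicit.
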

\begin{proof}
	We apply estimate \eqref{eq:strich} to Duhamel's formula
	\[ u(t) = e^{\iu t \partial_x^2} u_0 -\iu \mu \int_0^t e^{\iu (t-s) \partial_x^2} (|u|^2u)(s) \dd s. \]
	Using also Minkowski's inequality and Sobolev's embedding $H^1 \hookrightarrow L^\infty$, we get
	\begin{align*}
	\|\partial_x u\|_{L^4([0,T] \times \T)}  &\lesssim_T \|\partial_x u_0\|_{L^2} + \int_0^T\|\partial_x(|u|^2u)(s)\|_{L^2} \dd s \\
	&\lesssim_M 1 + \|u\|_{L^2([0,T],L^\infty)}^2\|\partial_x u\|_{L^\infty([0,T],L^2)} \lesssim_{M,T} 1. \qedhere
	\end{align*}
\end{proof}

We now give a representation of the local error of the low-regularity integrator \eqref{LRI}. The calculations are inspired by the ones in Section 3 of \cite{RS}. But compared to there and also \cite{OS}, we do not insert the approximation $u(s) \approx e^{\iu s \partial_x^2}u_0$ at first. This makes it easier for us to apply Corollary \ref{CorStrichu} in the subsequent Lemma \ref{LemSumErrSchr}. 

\begin{Lemma}\label{LemLokFSchr}
	Let $u$ and $T$ be given by Assumption \ref{AssSchr}. Then for $\tau \in (0,T]$, the local error of \eqref{LRI} is given by
	\[ u(\tau)-u_1 = \mu \int_0^\tau\int_0^s e^{\iu (\tau-\sigma)\partial_x^2} D(\sigma,s) \dd \sigma \dd s. \]
	Here we define
	\[D(\sigma,s) = D_1(\sigma,s) + D_2(\sigma,s) + D_3(\sigma,s) \]
	with 
	\begin{align*}
	D_1(\sigma,s) &\coloneqq \mu u(\sigma)^2\Big(e^{2\iu(\sigma-s)\partial_x^2}(|u|^2\bar u)(\sigma) -2 |u(\sigma)|^2 e^{2\iu(\sigma-s)\partial_x^2} \bar u(\sigma) \Big), \\
	D_2(\sigma,s) &\coloneqq -2 (\partial_x u(\sigma))^2  e^{2\iu(\sigma-s)\partial_x^2} \bar u(\sigma), \\
	D_3(\sigma,s) &\coloneqq -4 u(\sigma)\partial_x u(\sigma)    e^{2\iu(\sigma-s)\partial_x^2} \partial_x \bar u(\sigma).
	\end{align*}
\end{Lemma}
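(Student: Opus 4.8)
The goal is to derive the stated representation of the local error $u(\tau)-u_1$ by comparing the exact Duhamel representation of $u(\tau)$ against the one-step map $\Phi_\tau(u_0)$, keeping the exact solution $u(\sigma)$ (rather than its linear approximation $e^{\iu\sigma\partial_x^2}u_0$) throughout, exactly as the remark preceding the lemma advertises.

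\medskip

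The plan is as follows. First I would write down Duhamel's formula for the exact solution,
\[
u(\tau) = e^{\iu\tau\partial_x^2}u_0 - \iu\mu\int_0^\tau e^{\iu(\tau-s)\partial_x^2}(|u|^2u)(s)\dd s,
\]
and substitute again Duhamel's formula for $u(s)$ inside the nonlinearity, so that the leading linear flow $e^{\iu s\partial_x^2}u_0$ appears in the cubic term; the remaining pieces will be of higher order and contribute to $D_1$. In parallel, I would expand the one-step map $u_1=\Phi_\tau(u_0)$ from \eqref{LRI}, using the integral definition \eqref{Defphi1} of $\varphi_1(-2\iu\tau\partial_x^2)$ to write
\[
u_1 = e^{\iu\tau\partial_x^2}u_0 - \iu\mu\int_0^\tau e^{\iu(\tau-s)\partial_x^2}\big(e^{\iu s\partial_x^2}u_0\big)^2\,e^{-2\iu s\partial_x^2}\overline{e^{\iu s\partial_x^2}u_0}\dd s,
\]
after pulling the outer $e^{\iu\tau\partial_x^2}$ through the bracket. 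The $e^{\iu\tau\partial_x^2}u_0$ terms cancel, and subtracting leaves a difference of two $s$-integrals of cubic expressions, both carrying the factor $-\iu\mu$ and a shared dispersive prefactor $e^{\iu(\tau-s)\partial_x^2}$.

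\medskip

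The core of the argument is then to rewrite each of these cubic integrands as the $\sigma$-integral of a $\sigma$-derivative from $0$ to $s$, turning the single-integral difference into the double integral $\int_0^\tau\int_0^s\dots\dd\sigma\dd s$ that appears in the statement. Concretely, I expect the exact term to differentiate as $\partial_\sigma\!\big[e^{\iu(\tau-\sigma)\partial_x^2}\,u(\sigma)^2\,e^{2\iu(\sigma-s)\partial_x^2}\bar u(\sigma)\big]$ once one writes $u(s)$-type factors in terms of $u(\sigma)$ and the free flow; the key algebraic point is that the operators $e^{\iu(\tau-\sigma)\partial_x^2}$ and $e^{2\iu(\sigma-s)\partial_x^2}$ are arranged so that differentiating in $\sigma$ produces exactly the three groups $D_1,D_2,D_3$. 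The terms $D_2$ and $D_3$ should emerge from the product rule hitting the two $u(\sigma)$ factors in $u(\sigma)^2$ and producing $\partial_x u(\sigma)$ factors via the identity $\partial_\sigma u = \iu\partial_x^2 u - \iu\mu|u|^2u$ combined with the $\partial_x^2$ brought down from the flows; the second-order spatial derivatives must be integrated by parts (using that on $\T$ there are no boundary terms) to trade $\partial_x^2$ for products of first derivatives $(\partial_x u)^2$ and $u\,\partial_x u\,\partial_x\bar u$, which is where the explicit constants $-2$ and $-4$ come from. The nonlinear part of $\partial_\sigma u$ and the mismatch between $|u|^2u$ and its partially-frozen counterpart feed into $D_1$.

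\medskip

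The main obstacle will be the careful bookkeeping in this differentiate-and-integrate-by-parts step: one has to track how $\partial_x^2$ acting through the Schrödinger group interacts with the complex conjugate factor (where $e^{2\iu(\sigma-s)\partial_x^2}$ acts on $\bar u$, the conjugation flips signs of the exponent), and verify that after integrating by parts on $\T$ the surviving terms regroup precisely into $D_1+D_2+D_3$ with the claimed coefficients. I would organize this by first checking that the total $\sigma$-antiderivative evaluated at the endpoints $\sigma=0$ and $\sigma=s$ reproduces the difference of the exact and numerical cubic integrands from the previous step, and only then expand the derivative term by term. The reason for keeping $u(\sigma)$ unfrozen, as the authors note, is that it leaves $\partial_x u(\sigma)$ and $u(\sigma)$ as genuine factors of the true solution in $D_2$ and $D_3$, so that the forthcoming Lemma~\ref{LemSumErrSchr} can absorb them using the space-time bound of Corollary~\ref{CorStrichu} rather than a lossy fixed-time estimate.
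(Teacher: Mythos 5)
Your third paragraph does contain the key idea of the paper's proof: the interpolant $e^{\iu(\tau-\sigma)\partial_x^2}\big(u(\sigma)^2\, e^{2\iu(\sigma-s)\partial_x^2}\bar u(\sigma)\big)$ equals the exact Duhamel integrand $e^{\iu(\tau-s)\partial_x^2}(|u|^2u)(s)$ at $\sigma=s$ and the scheme's integrand $e^{\iu\tau\partial_x^2}\big(u_0^2\, e^{-2\iu s\partial_x^2}\bar u_0\big)$ at $\sigma=0$ (the latter is immediate from \eqref{LRI} and \eqref{Defphi1}), so the fundamental theorem of calculus in $\sigma$, the product rule, and the PDE yield the claim. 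However, two of your concrete steps are genuinely wrong. First, your displayed expansion of $u_1$ is false: ``pulling the outer $e^{\iu\tau\partial_x^2}$ through the bracket'' treats the Schrödinger group as if it distributed over products, which it does not. Comparing Fourier coefficients (input modes $k_1,k_2,-k_3$, output mode $k=k_1+k_2-k_3$), your right-hand side differs from the correct expression $e^{\iu\tau\partial_x^2}\big(u_0^2\, e^{-2\iu s\partial_x^2}\bar u_0\big)$ by the phase factor $e^{\iu s(k^2-k_1^2-k_2^2+k_3^2)}=e^{2\iu s(k_1-k_3)(k_2-k_3)}$. This is precisely the non-resonant phase whose removal produced the scheme \eqref{LRI} in \cite{OS}; after integration in $s$ the discrepancy is of exactly the same size and structure as the local error terms you are trying to represent, so this step would wreck the bookkeeping. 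Relatedly, your opening move of re-inserting Duhamel's formula for $u(s)$ to create factors $e^{\iu s\partial_x^2}u_0$ is a wrong turn: the target terms $D_1,D_2,D_3$ contain only the exact solution $u(\sigma)$, and the remark you quote states that precisely this insertion is avoided.

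Second, the mechanism you invoke for eliminating second-order derivatives cannot work: the formula for $D(\sigma,s)$ is a pointwise identity between functions on $\T$ --- there is no $x$-integral in the local error representation --- so ``integrating by parts on $\T$'' is not an available operation, and the constants $-2$ and $-4$ do not arise that way. In the actual computation the $\partial_x^2$ terms cancel algebraically: differentiating the outer group contributes $-\partial_x^2\big(u(\sigma)^2 g\big)$ with $g \coloneqq e^{2\iu(\sigma-s)\partial_x^2}\bar u(\sigma)$, whose Leibniz expansion $-2u\,\partial_x^2u\, g-2(\partial_xu)^2g-4u\,\partial_xu\,\partial_xg-u^2\partial_x^2g$ already contains $D_2$ and $D_3$ verbatim; the terms carrying $\partial_x^2u$ and $\partial_x^2\bar u$ are cancelled by the linear parts of the PDE applied to $\partial_\sigma u$ and $\partial_\sigma\bar u$ together with the $2\iu\partial_x^2$ produced by differentiating the inner group, and the cubic leftovers assemble into $D_1$. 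A complete argument should also justify, as the paper does, that the $\sigma$-derivative exists in $H^{-1}(\T)$ (via $L^1\hookrightarrow H^{-1}$ and the fact that multiplication by $H^1$ functions is bounded on $H^{-1}$). With these repairs your outline becomes the paper's proof; as written, it does not go through.
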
	

\begin{proof}
	By \eqref{Defphi1}, we have
	\[ \tau\varphi_1(-2\iu \tau \partial_x^2 ) \bar u_0 = \int_0^\tau e^{-2\iu s \partial_x^2} \bar u_0 \dd s.
	\]	
	Hence, we get by Duhamel's formula and the fundamental theorem of calculus
	\begin{align*}
	u(\tau) - u_1 &= -\iu \mu e^{\iu \tau \partial_x^2} \int_0^\tau \Big(e^{-\iu s \partial_x^2} (u^2\bar u)(s)  - u_0^2 e^{-2\iu s \partial_x^2 } \bar u_0 \Big) \dd s \\
	&= \mu e^{\iu \tau \partial_x^2}\!\int_0^\tau  (N(s,s)-N(0,s)) \dd s = \mu e^{\iu \tau \partial_x^2}\int_0^\tau \int_0^s \partial_1 N(\sigma,s) \dd \sigma \dd s.
	\end{align*}
	Here, the function $N(\cdot,s) \in C^1([0,\tau],H^{-1}(\T))$ is defined as
	\[ N(\sigma,s) \coloneqq -\iu e^{-\iu \sigma \partial_x^2}\Big(u(\sigma)^2 e^{2 \iu (\sigma-s)\partial_x^2} \bar u(\sigma)\Big). \]
	Using the product rule 
	and the differential equation \eqref{NLS}, we compute the derivative as
	\begin{align*}
	\partial_1 N(\sigma,s) &= e^{-\iu \sigma \partial_x^2} \Big[ -\partial_x^2\Big( u(\sigma)^2 e^{2 \iu (\sigma-s)\partial_x^2} \bar u(\sigma)\Big) -2\iu u(\sigma)  \partial_t u(\sigma)e^{2 \iu (\sigma-s)\partial_x^2} \bar u(\sigma) \\
	&\quad  + 2u(\sigma)^2 e^{2 \iu (\sigma-s)\partial_x^2} \partial_x^2 \bar u(\sigma) - \iu u(\sigma)^2 e^{2 \iu (\sigma-s)\partial_x^2} \partial_t \bar u(\sigma) \Big] \\
	&= e^{-\iu \sigma \partial_x^2} \Big[\! -2 \partial_x^2 u(\sigma) u(\sigma)  e^{2 \iu (\sigma-s)\partial_x^2} \bar u(\sigma) -2 (\partial_x u(\sigma))^2 e^{2 \iu (\sigma-s)\partial_x^2} \bar u(\sigma) \\
	&\quad  - 4 u(\sigma ) \partial_x u(\sigma)  e^{2 \iu (\sigma-s)\partial_x^2} \partial_x\bar u(\sigma) - u(\sigma)^2 e^{2 \iu (\sigma-s)\partial_x^2}\partial_x^2 \bar  u(\sigma) \\
	&\quad  + 2 u(\sigma) \partial_x^2 u(\sigma) e^{2 \iu (\sigma-s)\partial_x^2} \bar u(\sigma) - 2\mu   u(\sigma) (|u|^2u)(\sigma) e^{2 \iu (\sigma-s)\partial_x^2} \bar u(\sigma) \\
	&\quad +2 u(\sigma)^2 e^{2 \iu (\sigma-s)\partial_x^2} \partial_x^2 \bar u(\sigma) - u(\sigma)^2 e^{2 \iu (\sigma-s)\partial_x^2} \partial_x^2 \bar u(\sigma) \\
	&\quad + \mu u(\sigma)^2 e^{2 \iu (\sigma-s)\partial_x^2} (|u|^2 \bar u)(\sigma) \Big] \\
	&=  e^{-\iu \sigma \partial_x^2} \Big[ -2 (\partial_x u(\sigma))^2 e^{2 \iu (\sigma-s)\partial_x^2} \bar u(\sigma) \\
	&\quad   - 4 u(\sigma ) \partial_x u(\sigma)  e^{2 \iu (\sigma-s)\partial_x^2} \partial_x \bar u(\sigma) \\
	&\quad   + \mu   u(\sigma)^2 \Big(-2 |u(\sigma)|^2 e^{2 \iu (\sigma-s)\partial_x^2} \bar u(\sigma) +  e^{2 \iu (\sigma-s)\partial_x^2} (|u|^2 \bar u)(\sigma)\Big) \Big] \\
	&= e^{-\iu \sigma \partial_x^2} \Big[D_1(\sigma,s)+ D_2(\sigma,s) + D_3(\sigma,s)\Big] ,
	\end{align*}
	where we exploit the cancellation of all second-order partial derivatives. The derivative is well-defined in $H^{-1}(\T)$ since in 1D we can use the embedding $L^1 \hookrightarrow H^{-1}$ and that the multiplication by an $H^1$ function is a continuous operator on $H^{-1}$.
\end{proof}

In the next step we bound the sum of the local errors terms, where we will crucially exploit Corollary \ref{CorStrichu} as well as the dual of estimate \eqref{eq:strich}. 

\begin{Lemma}\label{LemSumErrSchr}
	Let $u$, $T$, and $M$ be given by Assumption \ref{AssSchr}.
	Then we can bound the sum of local errors of \eqref{LRI} by
	\[ \Big\|\sum_{k=0}^{n-1} e^{\iu (n-k-1)\tau \partial_x^2 }\Big(u(t_{k+1})-\Phi_\tau(u(t_k))\Big) \Big\|_{L^2} \lesssim_{M,T} \tau,  \]
	for all $\tau \in (0,T]$ and $n \in \N_0$ with $n\tau \le T$.
\end{Lemma}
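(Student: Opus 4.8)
The plan is to argue by duality, testing the sum against an arbitrary $w \in L^2$ with $\|w\|_{L^2}\le 1$ and exploiting that $e^{\iu t\partial_x^2}$ is unitary. Applying Lemma \ref{LemLokFSchr} on the interval $[t_k,t_{k+1}]$, i.e.\ with $u(\cdot)$ replaced by its time-translate $u(t_k+\cdot)$, the local error at step $k$ reads
\[ u(t_{k+1})-\Phi_\tau(u(t_k)) = \mu\int_0^\tau\int_0^s e^{\iu(\tau-\sigma)\partial_x^2}D^{(k)}(\sigma,s)\dd\sigma\dd s, \]
where $D^{(k)}$ is $D$ with $u$ shifted by $t_k$. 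Moving every propagator onto the test function and combining exponentials via $e^{\iu(n-k-1)\tau\partial_x^2}e^{\iu(\tau-\sigma)\partial_x^2}=e^{\iu(t_n-t_k-\sigma)\partial_x^2}$, the pairing becomes $\mu\sum_k\int_0^\tau\int_0^s\langle D^{(k)}(\sigma,s),\psi(t_k+\sigma)\rangle\dd\sigma\dd s$, where $\psi(t)\coloneqq e^{\iu(t-t_n)\partial_x^2}w$ is the linear Schrödinger evolution with data $w$. The decisive point is that $\psi(t)=e^{\iu t\partial_x^2}(e^{-\iu t_n\partial_x^2}w)$, so estimate \eqref{eq:strich} yields $\|\psi\|_{L^4([0,t_n]\times\T)}\lesssim_T\|w\|_{L^2}\le 1$, while $\|\psi(t)\|_{L^2}=1$ for every $t$.

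Next I would perform the inner $s$-integration. Since the only $s$-dependence in each $D_i^{(k)}$ sits in the factor $e^{2\iu(\sigma-s)\partial_x^2}$, integrating $s$ over $[\sigma,\tau]$ produces the Fourier multiplier $Q_\sigma\coloneqq\int_0^{\tau-\sigma}e^{-2\iu r\partial_x^2}\dd r$, which commutes with $\partial_x$ and satisfies $\|Q_\sigma\|_{L^2\to L^2}\le\tau-\sigma\le\tau$. This is where the single power of $\tau$ is gained. Performing the change of variables $t=t_k+\sigma$ on each subinterval then turns $\sum_{k=0}^{n-1}\int_0^\tau\dd\sigma$ into $\int_0^{t_n}\dd t$, so the step summation is absorbed into a bounded continuous-time integral rather than contributing a factor $\tau^{-1}$. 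The pairing reduces to $\int_0^{t_n}\langle \widetilde D(t),\psi(t)\rangle\dd t$, with $\widetilde D(t)$ built from the slow factors of $D_i$ at real time $t$ and an operator of norm $\le\tau$ acting on the propagated factor.

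It then remains to estimate the three contributions by Hölder in space and time. For $D_2$ the integrand is controlled by $\|(\partial_x u(t))^2\|_{L^2}\,\|Q\bar u(t)\|_{L^4}\,\|\psi(t)\|_{L^4}$; using $\|Q\bar u(t)\|_{H^1}\le\tau M$ together with $H^1\hookrightarrow L^4$ gives $\|Q\bar u(t)\|_{L^4}\lesssim\tau M$, and Hölder in $t$ bounds the result by $\tau M\,\|\partial_x u\|_{L^4([0,t_n]\times\T)}^2\,\|\psi\|_{L^4([0,t_n]\times\T)}$, which is $\lesssim_{M,T}\tau$ by Corollary \ref{CorStrichu} and the Strichartz bound on $\psi$. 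The term $D_3$ is handled identically, now distributing $L^\infty$ onto $u$, $L^4$ onto $\partial_x u$ and onto $\psi$, and $L^2$ onto $Q\partial_x\bar u$ (again of norm $\le\tau M$). Finally $D_1$ contains no problematic derivatives, so $H^1\hookrightarrow L^\infty$, the bound $\|Q_\sigma\|\le\tau$, and $\|\psi(t)\|_{L^2}=1$ suffice to estimate it by $\lesssim_{M,T}\tau$ without invoking Strichartz at all. Taking the supremum over $w$ completes the proof.

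The main obstacle is conceptual rather than computational: one must realize that testing against $w$ and replacing it by the backward linear solution $\psi$ converts the doubly propagated local-error expression into a genuine space-time pairing, in which the external derivative factors are controlled by Corollary \ref{CorStrichu}, the test function by \eqref{eq:strich}, and the single saving power of $\tau$ is extracted from the inner propagator integral while the step summation reassembles into the harmless integral $\int_0^{t_n}$. The delicate bookkeeping is to ensure that exactly one factor $\tau$ is spent and that every remaining space-time norm stays $\tau$- and $n$-independent.
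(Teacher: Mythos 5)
Your proof is correct and follows essentially the same route as the paper: the same local-error representation and splitting $D_1+D_2+D_3$, the single power of $\tau$ gained from the inner $s$-integration, the reassembly of the step sum into a continuous-time integral, and Corollary \ref{CorStrichu} combined with the Strichartz bound \eqref{eq:strich} on the propagated factor. Your duality argument with the backward free solution $\psi$ is just the unwrapped form of the dual Strichartz estimate that the paper invokes for the $D_3$ term (for $D_2$ the paper argues a bit more simply, using $H^1\hookrightarrow L^\infty$ on the propagated factor and unitarity instead of Strichartz, but your uniform treatment works equally well).
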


\begin{proof}
	By Lemma \ref{LemLokFSchr} with $u(t_k+\cdot)$ instead of $u$,
	\begin{align*}
	 &\sum_{k=0}^{n-1} e^{\iu (n-k-1)\tau \partial_x^2 }\Big(u(t_{k+1})-\Phi_\tau(u(t_k))\Big) \\
	 &= \mu \sum_{k=0}^{n-1} e^{\iu (n-k)\tau \partial_x^2 } \int_0^\tau \int_0^s e^{-\iu \sigma \partial_x^2}D(t_k + \sigma,t_k + s) \dd \sigma \dd s. 
	 \end{align*}
	 We now use the decomposition $D = D_1 + D_2 + D_3$ from Lemma \ref{LemLokFSchr}. For the first term we even get
	 \[ \Big\|\sum_{k=0}^{n-1} e^{\iu (n-k)\tau \partial_x^2 } \int_0^\tau \int_0^s e^{-\iu \sigma \partial_x^2}D_1(t_k + \sigma,t_k + s) \dd \sigma \dd s \Big\|_{H^1} \lesssim_{M,T} n\tau^2 \lesssim_T \tau, \]
	 using the algebra property of $H^1 $ in 1D. The second term is controlled by
	 \begin{align*}
	 &\Big\|\sum_{k=0}^{n-1} e^{\iu (n-k)\tau \partial_x^2 } \int_0^\tau \int_0^s e^{-\iu \sigma \partial_x^2}D_2(t_k + \sigma,t_k + s) \dd \sigma \dd s \Big\|_{L^2} \\
	 &\le \sum_{k=0}^{n-1} \int_0^\tau \int_0^s \|D_2(t_k + \sigma,t_k + s)\|_{L^2} \dd \sigma \dd s \\
	 &\lesssim \sum_{k=0}^{n-1} \int_0^\tau \int_0^\tau \|(\partial_x u(t_k + \sigma))^2\|_{L^2}\|e^{2\iu (\sigma-s)\partial_x^2}\bar u(t_k + \sigma)\|_{L^\infty} \dd \sigma \dd s \\
	 &\lesssim \tau  \sum_{k=0}^{n-1} \int_0^\tau  \|\partial_x u(t_k + \sigma)\|_{L^4}^2\| u(t_k + \sigma)\|_{H^1} \dd \sigma \lesssim_M \tau \|\partial_x u\|_{L^2([0,T],L^4)}^2  \\
	 &\lesssim_T \tau \|\partial_x u\|_{L^4([0,T]\times \T)}^2 \lesssim_{M,T} \tau.
	 \end{align*}
	 Here we used Hölder's inequality, the Sobolev embedding $H^1 \hookrightarrow L^\infty$, and Corollary \ref{CorStrichu}. The term involving $D_3$ {cannot be dealt with in such a straightforward manner since the Schrödinger propagator $e^{2\iu (\sigma-s)\partial_x^2}$ is not uniformly bounded on $L^4$. Therefore, we first use a change of variables to rewrite this term}
	 as
	  \begin{align*}
	 &\sum_{k=0}^{n-1} e^{\iu (n-k)\tau \partial_x^2 } \int_0^\tau \int_0^s e^{-\iu \sigma \partial_x^2}D_3(t_k + \sigma,t_k + s) \dd \sigma \dd s  \\
	&= \sum_{k=0}^{n-1} \int_{t_k}^{t_{k+1}} \int_{t_k}^s e^{\iu (n\tau-\sigma) \partial_x^2}D_3( \sigma,s) \dd \sigma \dd s \\
	 &= \int_0^{n\tau}e^{\iu (n\tau-\sigma) \partial_x^2} \int_\sigma^{\lceil\frac\sigma\tau\rceil\tau} D_3( \sigma,s)\dd s \dd \sigma,
	 \end{align*}
	 where the application of Fubini's theorem is justified since the double integral converges absolutely in $H^{-1}$. We next apply the dual of the periodic Strichartz estimate \eqref{eq:strich}, which reads
	 \[ \Big\|\int_0^Te^{-\iu t \partial_x^2} F(t) \dd t \Big\|_{L^2} \lesssim_T \|F\|_{L^{\frac43}([0,T] \times \T)}, \]
	 {see Corollary 2.5 of \cite{Bourgain93} and compare also equation (2.25) of \cite{Tao2006}.}
	 We infer that
	 \begin{align*}
	 &\Big\|\int_0^{n\tau}e^{\iu (n\tau-\sigma) \partial_x^2} \int_\sigma^{\lceil\frac\sigma\tau\rceil\tau} D_3( \sigma,s)\dd s \dd \sigma\Big\|_{L^2} \\
	 &\lesssim_T \Big\|\sigma \mapsto \int_\sigma^{\lceil\frac\sigma\tau\rceil\tau} D_3( \sigma,s)\dd s \Big\|_{L^\frac43([0,T] \times \T)} \\
	 & \lesssim \Big\|\sigma \mapsto \|u(\sigma)\|_{L^\infty}\|\partial_x u(\sigma)\|_{L^4} \int_\sigma^{\lceil\frac\sigma\tau\rceil\tau}    \|e^{2\iu(\sigma-s)\partial_x^2} \partial_x \bar u(\sigma)\|_{L^2}\dd s \Big\|_{L^\frac43([0,T])} \\
	 &\lesssim_M \tau \|\partial_x u\|_{L^\frac43([0,T],L^4)}   \lesssim_T \tau  \|\partial_x u\|_{L^4([0,T]\times \T)} \lesssim_{M,T} \tau,
	 \end{align*}
	 using again Hölder's inequality, the Sobolev embedding $H^1 \hookrightarrow L^\infty$, and Corollary \ref{CorStrichu}.
\end{proof}

We can now finish the proof of the global error bound in a classical way with the help of the discrete Gronwall lemma.

\begin{proof}[Proof of Theorem \ref{ThmLRI}]
	We define the error
	\[ e_n \coloneqq u(t_n)-u_n.\]
	We get the recursion formula
	\begin{align*}
	e_{n+1} &= u(t_{n+1})-\Phi_\tau(u(t_n))+\Phi_\tau(u(t_n))-\Phi_\tau(u_n)\\
	&= u(t_{n+1})-\Phi_\tau(u(t_n)) + e^{\iu\tau \partial_x^2}e_n \\
	&\quad-\iu\tau\mu e^{\iu\tau \partial_x^2}\Big((u(t_n))^2 \varphi_1(-2\iu\tau\partial_x^2) \bar u(t_n) -(u_n)^2 \varphi_1(-2\iu\tau\partial_x^2) \bar u_n  \Big).
	\end{align*}
	This formula implies that
	\begin{align*}
	e_n &= \sum_{k=0}^{n-1} e^{\iu(n-k-1)\tau\partial_x^2}\Big( u(t_{k+1})-\Phi_\tau(u(t_k))\Big) \\ 
	&\quad- \iu \tau \mu \sum_{k=0}^{n-1} e^{\iu(n-k)\tau\partial_x^2}\Big((u(t_k))^2 \varphi_1(-2\iu\tau\partial_x^2) \bar u(t_k) -(u_k)^2 \varphi_1(-2\iu\tau\partial_x^2) \bar u_k  \Big),
	\end{align*}
	exploiting that $e_0=0$. From Lemma \ref{LemSumErrSchr} and standard estimates we infer that
	\[\|e_n\|_{L^2} \lesssim_{M,T} \tau +  \tau\sum_{k=0}^{n-1}(1+\|e_k\|^2_{H^\frac34})\|e_k\|_{L^2} \]
	using the Sobolev embedding $H^{3/4} \hookrightarrow L^\infty$ and the representation $u_k = u(t_k)-e_k$. By Theorem \ref{ThmOS} with $r = 3/4$ and $\gamma=1/4$, we find $\tau_0>0$ depending only on $M$ and $T$, such that
	\[\|e_n\|_{H^\frac34} \le 1\]
	for all $\tau \in(0,\tau_0]$ and $n \in \N_0$ with $n\tau \le T$. For such $\tau $ and $n$ we thus derive that
	\[\|e_n\|_{L^2} \lesssim_{M,T} \tau +  \tau\sum_{k=0}^{n-1}\|e_k\|_{L^2}. \]
	By the discrete Gronwall inequality, we can conclude that
	\[ \|e_n\|_{L^2} \lesssim_{M,T} \tau. \qedhere \]
\end{proof}	

\section{Proof of the result for the nonlinear wave equation}\label{SecWave}

In this section we carry out the proof of Theorem \ref{ThmCorrLie}. Our first goal is to show estimate \eqref{eq:nullformest}.
Therefore we define the bilinear form
\[ Q(\phi,\psi) \coloneqq \partial_t\phi\partial_t\psi - \partial_x\phi\partial_x\psi.\]
As a preparatory step, we treat the homogeneous problem.

\begin{Lemma}\label{LemNullfHom}
	Let $\phi$ and $\psi$ solve the homogeneous wave equations
	\begin{align*}
	\partial_t^2 \phi - \partial_x^2 \phi &= 0, \quad \phi(0) = \phi_0, \quad \partial_t \phi(0) = \phi_1 \\
	\partial_t^2 \psi - \partial_x^2 \psi &= 0, \quad \psi(0) = \psi_0, \quad \partial_t \psi(0) = \psi_1
	\end{align*}
	with Cauchy data $\phi_0$, $\psi_0 \in H^1(\T)$ and $\phi_1$, $\psi_1 \in L^2(\T)$. We then have the estimate
	\[ \|Q(\phi,\psi)\|_{L^2(\T \times \T)} \lesssim (\|\partial_x\phi_0\|_{L^2} + \|\phi_1\|_{L^2})(\|\partial_x\psi_0\|_{L^2} + \|\psi_1\|_{L^2}). \]
\end{Lemma}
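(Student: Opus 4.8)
The plan is to diagonalize the homogeneous wave equation into left- and right-moving parts via d'Alembert's formula, exploit the resulting null cancellation in $Q$, and then reduce the space-time $L^2$ norm to a product of spatial $L^2$ norms by a one-dimensional change of variables.

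First I would write $\phi(t,x) = f(x+t) + g(x-t)$, so that $\partial_t\phi = f'(x+t) - g'(x-t)$ and $\partial_x\phi = f'(x+t) + g'(x-t)$. Matching the Cauchy data at $t=0$ gives $f' = \tfrac12(\partial_x\phi_0 + \phi_1)$ and $g' = \tfrac12(\partial_x\phi_0 - \phi_1)$, which are genuine $\T$-periodic $L^2$ functions; only the antiderivatives $f,g$ may fail to be periodic (due to a nonzero mean of $\phi_1$), but these never enter $Q$. Writing $\psi$ analogously as $p(x+t) + q(x-t)$ with $p' = \tfrac12(\partial_x\psi_0 + \psi_1)$ and $q' = \tfrac12(\partial_x\psi_0 - \psi_1)$, a direct expansion yields
\[ Q(\phi,\psi) = \partial_t\phi\,\partial_t\psi - \partial_x\phi\,\partial_x\psi = -2\big(f'(x+t)\,q'(x-t) + g'(x-t)\,p'(x+t)\big). \]
The decisive feature is the null cancellation: the two \emph{parallel} products $f'(x+t)\,p'(x+t)$ and $g'(x-t)\,q'(x-t)$, in which both factors travel in the same direction, drop out, and only the \emph{transversal} products of a right-mover with a left-mover survive.

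The key step is then the factorization identity
\[ \|f'(x+t)\,q'(x-t)\|_{L^2(\T\times\T)} = \|f'\|_{L^2(\T)}\,\|q'\|_{L^2(\T)}, \]
and likewise for the second transversal term. I would prove this by substituting $y = x+t$ in the spatial integral for fixed $t$, which turns the left-hand side into $\int_\T |f'(y)|^2 \big(\int_\T |q'(y-2t)|^2 \dd t\big)\dd y$; the inner integral equals $\|q'\|_{L^2(\T)}^2$ because, as $t$ runs over $\T$, the argument $y-2t$ covers $\T$ exactly twice while the substitution $w=y-2t$ contributes a factor $\tfrac12$. Combining this factorization with the triangle inequality on the two transversal terms, together with the bounds $\|f'\|_{L^2}, \|g'\|_{L^2} \le \tfrac12(\|\partial_x\phi_0\|_{L^2} + \|\phi_1\|_{L^2})$ and the analogous bounds for $p',q'$, yields the asserted estimate (in fact with implicit constant $1$).

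The main obstacle is structural rather than computational: one must recognize that $Q$ annihilates precisely the parallel interactions, for without this cancellation a surviving product of the form $f'(x+t)\,p'(x+t)$ would appear, whose space-time $L^2$ norm does \emph{not} factor and would instead force control of $L^4$-type quantities of the data — exactly the loss the lemma is designed to avoid. The only point requiring care in the remaining computation is the time-periodicity on $\T$ and the double covering in the substitution $y-2t$, which is where the torus geometry (integer frequencies, hence $2\pi$-periodic derivatives of solutions to the homogeneous equation) enters.
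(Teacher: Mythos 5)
Your proof is correct and follows essentially the same route as the paper's: d'Alembert decomposition into left- and right-movers, cancellation of the parallel interactions in $Q$, the exact factorization of the space-time $L^2$ norm of a transversal product via change of variables, and the triangle inequality. Your added remark that only the (periodic) derivatives $f',g',p',q'$ enter $Q$ — so the possible non-periodicity of the antiderivatives when $\phi_1$ has nonzero mean is harmless — is a nice point of care that the paper leaves implicit.
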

\begin{proof}
	We define
	\[ v_\phi = \frac12(\partial_x\phi_0+\phi_1), \quad w_\phi = \frac12(\partial_x\phi_0-\phi_1), \]
	and similarly for $\psi$. By d'Alembert's formula, we can then write
	\[\partial_t\phi(t,x)= v_\phi(x+t)-w_\phi(x-t), \quad \partial_x\phi(t,x)= v_\phi(x+t)+w_\phi(x-t).\]
	We compute
	\begin{align*}
	Q(\phi,\psi)(t,x) &= (v_\phi(x+t)-w_\phi(x-t))(v_\psi(x+t)-w_\psi(x-t)) \\
	&\quad - (v_\phi(x+t)+w_\phi(x-t))(v_\psi(x+t)+w_\psi(x-t)) \\
	 &= -2v_\phi(x+t)w_\psi(x-t) -2 w_\phi(x-t)v_\psi(x+t). 
	\end{align*}
	Note that the ``parallel interactions'' (where one has twice ``$x+t$'' or twice ``$x-t$'') are canceled and only the ``transverse interactions'' (where one has once ``$x+t$'' and once ``$x-t$'') remain. See p.293 of \cite{Tao2006} for further explanations of this phenomenon that also apply to the higher dimensional cases. We can now obtain the desired estimate in the following way. By integral substitutions $x-t =y$ and $y+2t = s$, it follows that
	\[ \int_\T \int_\T |v(x+t)w(x-t)|^2 \dd x \dd t = \|v\|^2_{L^2(\T)}\|w\|^2_{L^2(\T)} \]
	for general functions $v$, $w$. Hence,
	\begin{align*}
	\|Q(\phi,\psi)\|_{L^2(\T \times \T)} &\lesssim \|v_\phi\|_{L^2}\|w_\psi\|_{L^2} + \|w_\phi\|_{L^2}\|v_\psi\|_{L^2} \\
	& \lesssim (\|\partial_x\phi_0\|_{L^2} + \|\phi_1\|_{L^2})(\|\partial_x\psi_0\|_{L^2} + \|\psi_1\|_{L^2}). \qedhere
	\end{align*}
\end{proof}

Now we give the proof of \eqref{eq:nullformest}.

\begin{Proposition}\label{PropNullf}
	Let $T>0$ and $\phi$ solve the inhomogeneous wave equation
	\begin{equation}\label{eq:InhWave}
	\partial_t^2 \phi - \partial_x^2 \phi = F, \quad \phi(0) = \phi_0, \quad \partial_t \phi(0) = \phi_1
	\end{equation}
	on $[0,T] \times \T$ with data $\phi_0 \in H^1(\T)$, $\phi_1\in L^2(\T)$, and $ F \in L^1([0,T],L^2(\T))$. Then we have the inequality
	\[ \|Q(\phi,\phi)\|_{L^2([0,T] \times \T)} \lesssim_T \|\partial_x\phi_0\|^2_{L^2} + \|\phi_1\|^2_{L^2} + \|F\|^2_{L^1([0,T],L^2)}. \]
\end{Proposition}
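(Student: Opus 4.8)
The plan is to reduce the inhomogeneous estimate to the homogeneous null-form bound of Lemma~\ref{LemNullfHom} via Duhamel's principle together with the bilinearity of $Q$. First I would split $\phi = \phi_{\mathrm{free}} + \phi_{\mathrm{inh}}$, where $\phi_{\mathrm{free}}$ solves the homogeneous wave equation with data $(\phi_0,\phi_1)$, and $\phi_{\mathrm{inh}}(t) = \int_0^t \Psi_s(t)\dd s$ is the Duhamel term, with $\Psi_s$ denoting the homogeneous solution whose Cauchy data at time $s$ is $(0,F(s))$. Since $\Psi_s(s) = 0$, the Leibniz rule gives $\partial_t\phi_{\mathrm{inh}}(t) = \int_0^t \partial_t\Psi_s(t)\dd s$ (the boundary term vanishes), while trivially $\partial_x\phi_{\mathrm{inh}}(t) = \int_0^t\partial_x\Psi_s(t)\dd s$; hence the first derivatives of $\phi_{\mathrm{inh}}$ are themselves superpositions of first derivatives of homogeneous solutions, which is exactly what lets the bilinear form $Q$ be handled termwise.

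Using bilinearity and symmetry of $Q$ I would expand
\[ Q(\phi,\phi) = Q(\phi_{\mathrm{free}},\phi_{\mathrm{free}}) + 2\,Q(\phi_{\mathrm{free}},\phi_{\mathrm{inh}}) + Q(\phi_{\mathrm{inh}},\phi_{\mathrm{inh}}), \]
and insert the integral representations, so that each summand becomes an integral (respectively a double integral) in $s$ of $Q$ applied to two homogeneous solutions. Taking the $L^2([0,T]\times\T)$ norm and applying Minkowski's integral inequality then pulls the norm inside the $\dd s$-integrals, and to each resulting factor I would apply Lemma~\ref{LemNullfHom}. Two small adjustments are needed: first, the lemma is stated with time on the torus, but repeating its substitution argument over $t\in[0,T]$ yields $\int_0^T\int_\T |v(x+t)w(x-t)|^2\dd x\dd t\lesssim_T \|v\|_{L^2(\T)}^2\|w\|_{L^2(\T)}^2$, so the homogeneous null-form bound holds over $[0,T]\times\T$ with a $T$-dependent constant; second, the data of $\Psi_s$ is prescribed at time $s$ rather than $0$, but the $L^2$ norms of its left/right-moving d'Alembert profiles are independent of the reference time, and for $\Psi_s$ they both equal $\tfrac12\|F(s)\|_{L^2}$.

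With these in hand, the free term is bounded by $\|\partial_x\phi_0\|_{L^2}^2 + \|\phi_1\|_{L^2}^2$, the mixed term by $(\|\partial_x\phi_0\|_{L^2}+\|\phi_1\|_{L^2})\int_0^T\|F(s)\|_{L^2}\dd s$, and the Duhamel--Duhamel term by $\big(\int_0^T\|F(s)\|_{L^2}\dd s\big)^2 = \|F\|_{L^1([0,T],L^2)}^2$. Summing the three and applying Young's inequality $ab \le \tfrac12(a^2+b^2)$ to the mixed term yields precisely the claimed bound. The main obstacle I anticipate is not this bookkeeping but the low regularity: with $F \in L^1([0,T],L^2)$ and data only in $H^1\times L^2$, one has $\partial_{t,x}\phi\in C([0,T],L^2)$, so for fixed $t$ the product $Q(\phi,\phi)(t)$ is a priori only in $L^1(\T)$, and the differentiation under the integral sign, the use of Minkowski's inequality, and the d'Alembert profile identities must all be justified at this regularity. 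I would resolve this by first proving the estimate for smooth data and forcing, where $\phi$ is a classical solution and every manipulation is elementary, and then passing to the general case by a density argument, using that both sides depend continuously on $(\phi_0,\phi_1,F) \in H^1\times L^2\times L^1([0,T],L^2)$.
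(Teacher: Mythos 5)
Your proposal is correct and follows essentially the same route as the paper's proof: the same splitting into a free part plus a Duhamel superposition of homogeneous waves, expansion of $Q$ by bilinearity, Minkowski's inequality, and Lemma~\ref{LemNullfHom} applied to pairs of homogeneous solutions, with the data-at-time-$s$ issue handled by invariance of the d'Alembert profile norms (the paper's energy equality), and the passage from $\T\times\T$ to $[0,T]\times\T$ handled by redoing the substitution (the paper instead invokes time-periodicity). The only real difference is your closing smooth-approximation step, which is harmless but unnecessary --- d'Alembert's formula and the substitution identity are valid directly for $L^2$ profiles, so the paper performs all manipulations at the stated regularity without a density argument (and, strictly speaking, your appeal to ``continuity of both sides'' is better phrased as lower semicontinuity of the left-hand norm under the distributional convergence $Q(\phi_n,\phi_n)\to Q(\phi,\phi)$ that follows from convergence of $\partial_{t,x}\phi_n$ in $C([0,T],L^2)$).
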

\begin{proof}
	We decompose $\phi = \phi_\mathrm{hom} + \phi_\mathrm{inh}$, where $\phi_\mathrm{hom}$ solves \eqref{eq:InhWave} with $F=0$ and $\phi_\mathrm{inh}$ solves \eqref{eq:InhWave} with $\phi_0=\phi_1=0$. The estimate for $Q(\phi_\mathrm{hom},\phi_\mathrm{hom})$ follows directly from Lemma \ref{LemNullfHom} and the periodicity of $\partial_{t,x}\phi_\mathrm{hom}$ in time. To treat the inhomogeneous part, for almost all $s \in [0,T]$, we define $\phi^s$ to be the solution to the homogeneous equation
	\[
	\partial_t^2 \phi^s - \partial_x^2 \phi^s = 0, \quad \phi^s(s) = 0, \quad \partial_t \phi^s(s) = F(s).
	\]
	By Duhamel's formula, $\phi_\mathrm{inh}$ is then given by
	\[\phi_\mathrm{inh}(t) = \int_0^t \phi^s(t) \dd s. \]
	It follows that we can express the bilinear term as
	\[Q(\phi_\mathrm{inh},\phi_\mathrm{inh})(t) = \int_0^t\int_0^t Q(\phi^s,\phi^r)(t) \dd s \dd r. \]
	From Minkowski's inequality, Lemma \ref{LemNullfHom}, and the energy equality we thus get
	\begin{align*}
	\|Q(\phi_\mathrm{inh},\phi_\mathrm{inh})\|_{L^2([0,T] \times \T)} &\le \int_0^T\int_0^T \|Q(\phi^s,\phi^r)\|_{L^2([0,T] \times \T)} \dd s \dd r \\
	&\lesssim \int_0^T\int_0^T (\|\partial_x\phi^s(0)\|_{L^2} + \|\partial_t\phi^s(0)\|_{L^2}) \\
	&\qquad \qquad\cdot (\|\partial_x\phi^r(0)\|_{L^2} + \|\partial_t\phi^r(0)\|_{L^2}) \dd s \dd r \\
	& \lesssim \|F\|_{L^1([0,T],L^2)}^2.
	\end{align*}
	The mixed term $Q(\phi_\mathrm{hom},\phi_{\mathrm{inh}})$ is treated similarly.
\end{proof}

We now turn to the nonlinear wave equation \eqref{NLW2}. Here it is convenient to work with the first-order reformulation. With the definitions
\begin{equation*}\label{DefsNLW}
U \coloneqq \begin{pmatrix}u \\ v\end{pmatrix} \mathrel{\widehat{=}} \begin{pmatrix}u \\ \partial_t u\end{pmatrix},\ A \coloneqq \begin{pmatrix}0 & I \\ \partial_x^2 & 0 \end{pmatrix},\ G(U) \coloneqq \begin{pmatrix}0 \\ g(u)\end{pmatrix},\ U_0 \coloneqq \begin{pmatrix}u_0 \\ v_0\end{pmatrix},
\end{equation*}
we obtain the equivalent differential equation
\begin{equation}
\begin{aligned} \label{NLW} \partial_{t} U(t) &= AU(t) +G(U(t)), \quad t \in [0,T], \\ U(0)&=U_0.
\end{aligned}\end{equation}
\begin{Assumption}\label{AssWave}
	There exists a time $T>0$ and a solution $U = (u,\partial_tu) \in C([0,T], H^1 \times L^2) \cap C^1([0,T],L^2 \times H^{-1})$ to the nonlinear wave equation \eqref{NLW} 
	with bound
	\begin{equation*}\label{DefMWave}
	M \coloneqq \|U\|_{L^\infty([0,T], H^1 \times L^2)}.
	\end{equation*}
\end{Assumption}

Since the nonlinearity $g$ belongs to $C^2(\R,\R)$, we can find an increasing function $L$ such that the bound
\begin{equation}\label{eq:gbound}
|g(z)|+|g'(z)|+|g''(z)| \le L(|z|)
\end{equation} 
holds for all $z \in \R$. 
In the following, we suppress the dependency on the function $L$ from \eqref{eq:gbound} in the $\lesssim$ notation. We now apply Proposition \ref{PropNullf} to the solution $u$ to the nonlinear problem \eqref{NLW2}.

\begin{Corollary}\label{CorNullEstu}
	Let $u$, $T$, and $M$ be given by Assumption \ref{AssWave}. Then we have the estimate
	\begin{equation*}
	\|(\partial_tu)^2 -(\partial_xu)^2\|_{L^2([0,T] \times \T)}  \lesssim_{M,T} 1. 
	\end{equation*}
\end{Corollary}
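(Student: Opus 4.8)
The plan is to apply Proposition \ref{PropNullf} directly to $\phi = u$, where $u$ is the solution to the nonlinear problem \eqref{NLW2} regarded as a solution of the inhomogeneous linear wave equation $\partial_t^2 u - \partial_x^2 u = F$ with forcing $F = g(u)$ and Cauchy data $\phi_0 = u_0 \in H^1$, $\phi_1 = v_0 \in L^2$. Since $Q(u,u) = (\partial_t u)^2 - (\partial_x u)^2$ by the definition of $Q$, the proposition will yield
\[
\|(\partial_t u)^2 - (\partial_x u)^2\|_{L^2([0,T]\times\T)} \lesssim_T \|\partial_x u_0\|_{L^2}^2 + \|v_0\|_{L^2}^2 + \|g(u)\|_{L^1([0,T],L^2)}^2,
\]
so it remains only to bound each term on the right-hand side by a constant depending on $M$ and $T$.

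The first two terms are immediate from Assumption \ref{AssWave}, which gives $\|\partial_x u_0\|_{L^2} \le \|u_0\|_{H^1} \le M$ and $\|v_0\|_{L^2} = \|\partial_t u(0)\|_{L^2} \le M$. The only term requiring a short argument is the forcing $g(u)$. First I would use the one-dimensional Sobolev embedding $H^1 \hookrightarrow L^\infty$ to obtain $\|u(t)\|_{L^\infty} \lesssim \|u(t)\|_{H^1} \le M$ uniformly in $t \in [0,T]$. The growth bound \eqref{eq:gbound} then provides the pointwise estimate $\|g(u(t))\|_{L^\infty} \le L(\|u(t)\|_{L^\infty}) \lesssim_M 1$, and since $\T$ has finite measure this upgrades to $\|g(u(t))\|_{L^2} \lesssim_M 1$ for each $t$. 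Integrating over $[0,T]$ yields $\|g(u)\|_{L^1([0,T],L^2)} \lesssim_{M,T} 1$, which is exactly what is needed to close the estimate.

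Because all the substantial content, namely the null form cancellation, is already packaged in Proposition \ref{PropNullf}, I do not anticipate a genuine obstacle; the proof amounts to a routine verification that the data and forcing have finite norms of the right type. The only mild point worth checking is that $u$ really qualifies as a solution in the sense required by Proposition \ref{PropNullf}: this follows from the regularity $U = (u, \partial_t u) \in C([0,T], H^1 \times L^2) \cap C^1([0,T], L^2 \times H^{-1})$ in Assumption \ref{AssWave} together with the membership $g(u) \in L^1([0,T], L^2)$ just established, which ensures that the Duhamel representation underlying Proposition \ref{PropNullf} is valid.
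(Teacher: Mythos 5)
Your proposal is correct and follows essentially the same route as the paper: both apply Proposition \ref{PropNullf} with $F=g(u)$ and reduce everything to the bound $\|g(u)\|_{L^1([0,T],L^2)}\lesssim_{M,T}1$, obtained from the growth bound \eqref{eq:gbound}, the Sobolev embedding $H^1\hookrightarrow L^\infty$, and the finiteness of the measure of $[0,T]\times\T$ (the paper phrases this last step as Hölder's inequality). Your write-up merely spells out the details the paper leaves implicit.
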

\begin{proof}
	By Proposition \ref{PropNullf}, we only need that
	\[ \|g(u)\|_{L^1([0,T],L^2)} \lesssim_{M,T} 1, \]
	and this follows from \eqref{eq:gbound}, Hölder's inequality, and the Sobolev embedding $H^1 \hookrightarrow L^\infty$.
\end{proof}

We now give a brief derivation of the corrected Lie splitting \eqref{CorrLie} proposed in \cite{KleinGordon}. It is based of the Lie splitting approximation for \eqref{NLW}, which is a formally first-order scheme given by
\begin{equation}\label{Lie}
U^\mathrm{Lie}_{n+1} = e^{\tau A}[U_n^\mathrm{Lie} + \tau G(U_n^\mathrm{Lie})].
\end{equation}
By the Duhamel formulation of \eqref{NLW}, the fundamental theorem of calculus, and Fubini's theorem, the local error of \eqref{Lie} can be represented as
\begin{equation}\label{LocErrLie}
U(\tau)-U^\mathrm{Lie}_1 
= e^{\tau A} \int_0^\tau (\tau-s) e^{-s A} H(U(s)) \dd s.
\end{equation}
Here we use the definition
\[H(U(s)) \coloneqq e^{sA} \frac{\mathrm d}{\mathrm ds}\Big[e^{-sA}G(U(s))\Big] = \begin{pmatrix}
-g(u(s)) \\ g'(u(s))\partial_t u(s)
\end{pmatrix}.\] 
Similar as in the Schrödinger case, we do not insert the approximation $U(s) \approx e^{s A}U_0$ (which was used in \cite{KleinGordon}) in order to create better conditions for applying Corollary \ref{CorNullEstu} later.

The crucial observation which allows the construction of the low-regularity integrator is that $H(U)$ satisfies the differential equation
\begin{equation}\label{HDiffEq}
\frac{\mathrm d}{\mathrm ds}H(U(s)) = -AH(U(s))+B(U(s))
\end{equation}
in $L^2(\T) \times H^{-1}(\T)$, where the remainder
\begin{equation}\label{DefB}
B(U) \coloneqq  \begin{pmatrix}
0 \\ g''(u)[(\partial_tu)^2-(\partial_xu)^2]+g'(u)g(u)
\end{pmatrix}
\end{equation}
only contains first-order derivatives of $u$. 
We plug the Duhamel approximation $H(U(s)) \approx e^{-sA}H(U_0)$ for \eqref{HDiffEq} into \eqref{LocErrLie} and exploit \eqref{Defphi2} to get
\[U(\tau)-U^\mathrm{Lie}_1 \approx e^{\tau A} \int_0^\tau (\tau-s) e^{-2s A} H(U_0) \dd s = \tau^2e^{\tau A}\varphi_2(-2\tau A)H(U_0).\] 
Adding this term on the Lie splitting \eqref{Lie} gives the formally second-order \emph{corrected Lie splitting}
\begin{equation}\label{CorrLieEasy}
U_{n+1} = \Psi_\tau(U_n) \coloneqq e^{\tau A}[U_n + \tau G(U_n) + \tau^2 \varphi_2(-2\tau A)H(U_n) ],
\end{equation}
which corresponds to \eqref{CorrLie}. From this derivation we immediately get the following representation of the local error. A related formula was derived in Lemma 6.2 of \cite{RuffSchnaubelt} in the 3D case.
\begin{Lemma}\label{LemmLokErrCorrLie}
	Let $U$ and $T$ be given by Assumption \eqref{AssWave}. Then the local error of the corrected Lie splitting \eqref{CorrLieEasy} is given by
	\[U(\tau)-U_1 
	= e^{\tau A} \int_0^\tau (\tau-s) e^{-2s A} \int_0^s e^{\sigma A}B(U(\sigma))\dd \sigma\dd s\]
	for all $\tau \in (0,T]$.
\end{Lemma}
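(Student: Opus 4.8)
The plan is to start from the exact local error of the plain Lie splitting in \eqref{LocErrLie} and subtract the correction term that turns \eqref{Lie} into the corrected scheme \eqref{CorrLieEasy}. Since $\Psi_\tau(U(0)) = e^{\tau A}[U_0 + \tau G(U_0) + \tau^2 \varphi_2(-2\tau A)H(U_0)]$ and $U^\mathrm{Lie}_1 = e^{\tau A}[U_0 + \tau G(U_0)]$, the difference $U_1 - U^\mathrm{Lie}_1$ equals exactly $\tau^2 e^{\tau A}\varphi_2(-2\tau A)H(U_0)$. Using the integral representation \eqref{Defphi2} of $\varphi_2$, this correction term is $e^{\tau A}\int_0^\tau (\tau-s)e^{-2sA}H(U_0)\dd s$. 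Therefore
\[
U(\tau)-U_1 = \big(U(\tau)-U^\mathrm{Lie}_1\big) - \big(U_1-U^\mathrm{Lie}_1\big)
= e^{\tau A}\int_0^\tau (\tau-s)\Big(e^{-sA}H(U(s)) - e^{-2sA}H(U_0)\Big)\dd s,
\]
so the whole task reduces to identifying the inner difference $e^{-sA}H(U(s)) - e^{-2sA}H(U_0)$ as the remaining iterated integral.

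The key step is to rewrite the quantity $H(U(s))$ via its own differential equation \eqref{HDiffEq}. I would set $h(s) \coloneqq e^{sA}H(U(s))$ and compute, using \eqref{HDiffEq}, that $\frac{\mathrm d}{\mathrm ds}h(s) = e^{sA}\big(AH(U(s)) + \frac{\mathrm d}{\mathrm ds}H(U(s))\big) = e^{sA}B(U(s))$, where the $AH(U(s))$ terms cancel precisely because of the structure of \eqref{HDiffEq}. Integrating from $0$ to $s$ and using $h(0)=H(U_0)$ gives $e^{sA}H(U(s)) = H(U_0) + \int_0^s e^{\sigma A}B(U(\sigma))\dd\sigma$. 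Applying $e^{-2sA}$ to both sides yields
\[
e^{-sA}H(U(s)) - e^{-2sA}H(U_0) = e^{-2sA}\int_0^s e^{\sigma A}B(U(\sigma))\dd\sigma,
\]
which is exactly the inner difference appearing above. Substituting this back produces the claimed formula.

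The main obstacle is purely a matter of regularity bookkeeping rather than a genuine difficulty. One must justify that $h$ is differentiable with the stated derivative in the correct function space, that the fundamental theorem of calculus applies, and that Fubini's theorem is valid when the order of the $\sigma$- and $s$-integrations is implicitly used; this is the same kind of low-regularity care already exercised in Lemma \ref{LemLokFSchr}, and it rests on the fact that \eqref{HDiffEq} holds in $L^2(\T)\times H^{-1}(\T)$ together with the mapping properties of the wave group $e^{tA}$ on these spaces. Since $B(U)$ from \eqref{DefB} involves only first-order derivatives of $u$ and $g\in C^2$, all the integrands are well-defined in $L^2\times H^{-1}$ under Assumption \ref{AssWave}, so the absolute convergence needed for these manipulations follows from the a priori bound $M$ and the embedding $H^1\hookrightarrow L^\infty$.
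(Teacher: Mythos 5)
Your proof is correct and is essentially the paper's own argument: the paper's one-line proof (``follows directly from \eqref{LocErrLie} and the Duhamel formulation of \eqref{HDiffEq}'') is exactly what you spell out, since your integrating-factor computation for $h(s)=e^{sA}H(U(s))$ is precisely the derivation of that Duhamel formula. The only difference is that you make the bookkeeping explicit, which the paper leaves implicit.
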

\begin{proof}
	Follows directly from \eqref{LocErrLie} and the Duhamel formulation of \eqref{HDiffEq}.
\end{proof}

We can now bound the sum of local errors with the help of Corollary \ref{CorNullEstu}.

\begin{Lemma}\label{LemSumErrWave}
	Let $U = (u,\partial_tu)$, $T$, and $M$ be given by Assumption \ref{AssWave}.
	Then we can bound the sum of local errors of \eqref{CorrLieEasy} by
	\[ \Big\|\sum_{k=0}^{n-1} e^{(n-k-1)\tau A }\Big(U(t_{k+1})-\Psi_\tau(U(t_k))\Big) \Big\|_{H^1 \times L^2} \lesssim_{M,T} \tau^2,  \]
	for all $\tau \in (0,T]$ and $n \in \N_0$ with $n\tau \le T$.
\end{Lemma}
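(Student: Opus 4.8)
The plan is to mirror the strategy of Lemma \ref{LemSumErrSchr}. First I would apply Lemma \ref{LemmLokErrCorrLie} to the shifted solution $U(t_k+\cdot)$, which solves \eqref{NLW} with initial value $U(t_k)$, to get
\[ U(t_{k+1})-\Psi_\tau(U(t_k)) = e^{\tau A}\int_0^\tau(\tau-s)e^{-2sA}\int_0^s e^{\sigma A}B(U(t_k+\sigma))\dd\sigma\dd s. \]
Inserting this into the sum, using $e^{(n-k-1)\tau A}e^{\tau A}=e^{(n-k)\tau A}$, and substituting $\sigma'=t_k+\sigma$, $s'=t_k+s$, the decisive algebraic observation is that the exponents telescope: since $t_k=k\tau$,
\[ e^{(n-k)\tau A}e^{-2(s'-t_k)A}e^{(\sigma'-t_k)A}=e^{(n\tau-2s'+\sigma')A}, \]
so the dependence on $k$ drops out entirely, exactly as in the Schrödinger case. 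Hence the whole sum collapses into a single space-time integral
\[ \int_0^{n\tau}\big(\lceil s'/\tau\rceil\tau-s'\big)\int_{\lfloor s'/\tau\rfloor\tau}^{s'}e^{(n\tau-2s'+\sigma')A}B(U(\sigma'))\dd\sigma'\dd s', \]
where the outer weight is bounded by $\tau$ and the inner integral runs over an interval of length at most $\tau$; here the exponent $n\tau-2s'+\sigma'$ stays in $[-\tau,n\tau]\subset[-T,T]$.

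Next I would record the space-time bound on the forcing. Writing $B(U)=(0,b)$ with $b=g''(u)[(\partial_tu)^2-(\partial_xu)^2]+g'(u)g(u)$, Corollary \ref{CorNullEstu} together with \eqref{eq:gbound} and $H^1\hookrightarrow L^\infty$ gives $\|b\|_{L^2([0,T]\times\T)}\lesssim_{M,T}1$; in particular $b\in L^1([0,T],L^2)$, which justifies the Fubini rearrangement above. The point I want to stress is that this is the \emph{only} usable bound: at a fixed time the null form $(\partial_tu)^2-(\partial_xu)^2$ lies merely in $L^1(\T)$ under the $H^1\times L^2$ assumption, so any argument passing through $\|b(\sigma')\|_{L^2}$ pointwise in time is doomed. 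The estimate must therefore retain the space-time $L^2$ structure throughout. This is the main obstacle, and the reason the null form estimate \eqref{eq:nullformest}, via Corollary \ref{CorNullEstu}, is indispensable here.

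To carry out the estimate I would use that $e^{sA}$ maps $\{0\}\times L^2$ into $H^1\times L^2$ with norm $\lesssim_T1$ for $|s|\le T$ (only the zero Fourier mode grows, and linearly). Applying Minkowski's inequality in $s'$ and then in $\sigma'$, together with this semigroup bound, reduces the norm to
\[ \lesssim_T\tau\int_0^{n\tau}\int_{\lfloor s'/\tau\rfloor\tau}^{s'}\|b(\sigma')\|_{L^2}\dd\sigma'\dd s'. \]
I would then refine the inner integral rather than discard it: Cauchy--Schwarz on the short $\sigma'$-interval gains a factor $\tau^{1/2}$, Cauchy--Schwarz in $s'$ on $[0,n\tau]$ contributes $(n\tau)^{1/2}\lesssim_T1$, and a final application of Fubini converts the resulting double integral into $\tau\,\|b\|_{L^2([0,n\tau]\times\T)}^2$. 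Collecting the powers of $\tau$ — one from the weight, $\tau^{1/2}$ from the $\sigma'$-step, and $\tau^{1/2}$ from the Fubini step — and inserting $\|b\|_{L^2L^2}\lesssim_{M,T}1$ yields the claimed bound $\lesssim_{M,T}\tau^2$. The harmless term $g'(u)g(u)$ is already absorbed into $\|b\|_{L^2L^2}$ and needs no separate treatment, though one could alternatively split it off and bound it directly by the triangle inequality.
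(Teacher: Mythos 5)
Your proof is correct, and it rests on the same two pillars as the paper's: the local error representation of Lemma \ref{LemmLokErrCorrLie} applied to $U(t_k+\cdot)$, and the space-time null form bound of Corollary \ref{CorNullEstu}, together with the boundedness of $e^{tA}$ on $H^1\times L^2$ for $|t|\lesssim T$ (including the linear growth of the zero mode, which you correctly isolate). The difference is in the bookkeeping. The paper never collapses the sum into a single space-time integral: since --- unlike in the Schrödinger case --- no dual Strichartz-type estimate has to be applied through the outer propagator, it simply uses the triangle inequality term by term, bounds each local error by $\tau^2\int_0^\tau \|B(U(t_k+\sigma))\|_{H^1\times L^2}\dd\sigma$, sums to obtain $\tau^2\|B(U)\|_{L^1([0,T],H^1\times L^2)}$, and then passes from $L^1$ to $L^2$ in time by Hölder before invoking Corollary \ref{CorNullEstu}. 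Your collapsed integral plus two applications of Cauchy--Schwarz and a Fubini step arrives at the same $\tau^2$, just with more work; the telescoping of the exponents, which is the decisive trick in Lemma \ref{LemSumErrSchr}, buys you nothing here because the propagator is discarded by a plain norm bound immediately afterwards. One conceptual remark: your claim that ``any argument passing through $\|b(\sigma')\|_{L^2}$ pointwise in time is doomed'' is overstated --- both the paper's proof and your own do exactly that. What is unavailable under the $H^1\times L^2$ assumption is a bound on $\|b(\sigma)\|_{L^2(\T)}$ \emph{uniformly} in $\sigma$; the a.e.-in-time finiteness and integrability of $\sigma\mapsto\|b(\sigma)\|_{L^2}$ are precisely what Corollary \ref{CorNullEstu} supplies, and an $L^1_tL^2_x$ bound on $b$ (as used in the paper) already yields the $\tau^2$ rate, so retaining the full space-time $L^2$ structure ``throughout'' is not actually necessary.
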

\begin{proof}
	By the triangle inequality and Lemma \ref{LemmLokErrCorrLie} with $U(t_k+\cdot)$ instead of $U$,
	\begin{align*}
	&\Big\|\sum_{k=0}^{n-1} e^{(n-k-1)\tau A }\Big(U(t_{k+1})-\Psi_\tau(U(t_k))\Big) \Big\|_{H^1 \times L^2} \\
	&\lesssim_T \tau^2\sum_{k=0}^{n-1} \int_0^\tau \|B(U(t_k+\sigma))\|_{H^1 \times L^2}\dd \sigma \le \tau^2 \|B(U)\|_{L^1([0,T],H^1 \times L^2)}.
	\end{align*}
	We next insert the definition \eqref{DefB} of $B$ and apply \eqref{eq:gbound} and finally Corollary \ref{CorNullEstu} to obtain
	\begin{align*}
	\|B(U)\|_{L^1([0,T],H^1 \times L^2)} &= \|g''(u)[(\partial_tu)^2-(\partial_xu)^2]+g'(u)g(u)\|_{L^1([0,T], L^2)} \\
	&\lesssim_{M,T} \|(\partial_tu)^2-(\partial_xu)^2\|_{L^2([0,T] \times \T)} + 1 \lesssim_{M,T} 1. \qedhere
	\end{align*}
\end{proof}

Similar as in the Schrödinger case, we conclude the proof of the global error bound using the discrete Gronwall lemma.

\begin{proof}[Proof of Theorem \ref{ThmCorrLie}]
	We proceed similar as in the proof of Theorem \ref{ThmLRI}.
	For the error
	\[ E_n \coloneqq U(t_n)-U_n\]
	we get the formula
	\begin{align*}
	E_n &= \sum_{k=0}^{n-1} e^{(n-k-1)\tau A}\Big( U(t_{k+1})-\Psi_\tau(U(t_k))\Big) \\ 
	&\quad+ \tau  \sum_{k=0}^{n-1} e^{(n-k)\tau A}\Big(G(U(t_k))-G(U_k)\Big) \\
	 &\quad+ \tau^2  \varphi_2(-2\tau A)\sum_{k=0}^{n-1} e^{(n-k)\tau A}\Big(H(U(t_k))-H(U_k)\Big).
	\end{align*}
	From Lemma \ref{LemSumErrWave}, \eqref{eq:gbound}, and standard estimates, we infer that
	\begin{equation*}\label{eq:waveproof}
	\|E_n\|_{H^1 \times L^2} \le c\tau^2 +  \tau\sum_{k=0}^{n-1}K(\|E_k\|_{H^1\times L^2})\|E_k\|_{H^1 \times L^2} 
	\end{equation*}
	with a constant $c>0$ and an increasing function $K$, both depending on $M$, $T$, and $L$. We define the maximum step size
	\[ \tau_0 \coloneqq (ce^{K(1)T})^{-\frac12}. \]
	Using the discrete Gronwall lemma, we obtain via induction on $n$ that
	\[ \|E_n\|_{H^1 \times L^2} \le c\tau^2 e^{K(1)T} \le 1 \]
	for all $ \tau \in (0,\tau_0]$ and $n \in \N_0$ with $n\tau \le T$.
\end{proof}

\section{Numerical Experiment}\label{SecNumExp}

{In this section we discuss the numerical behavior of the low-regularity integrators \eqref{LRI} and \eqref{CorrLie} applied to the nonlinear Schrödinger and wave equations \eqref{NLS} and \eqref{NLW2}, respectively. Our Python code to reproduce the results is available at \url{https://doi.org/10.35097/tqj4v6hysm9hvd9c}.}

\subsection{{Nonlinear Schrödinger equation}}
The numerical behavior of the scheme \eqref{LRI} applied to the nonlinear Schrödinger equation \eqref{NLS} has {already} been extensively studied in the literature. See, e.g., \cite{Alama,ORSBourgain,OS} for numerical experiments including comparisons with other schemes. However, they do {not} provide a clear picture concerning the convergence rate of the $L^2$ error in the situation of $H^1$ initial data. While the experiment in Figure 1 of \cite{Alama} shows first-order convergence as proven in our Theorem \ref{ThmLRI}, the experiment in Figure 1 of \cite{ORSBourgain} suggests an order reduction down to $3/4$. A possible explanation of this behavior is that an error bound of the form
\[ \|u(n\tau)-u_n\|_{L^2(\T)} \le c\tau^\beta \]
could hold for some $\beta \in (0,1)$, where one might have $c \ll C$, with $C$ from Theorem \ref{ThmLRI}, depending on the precise choice of initial data. Therefore, we provide a numerical example where a wider range of $\tau$ is considered than in \cite{Alama,ORSBourgain}.

We solve the nonlinear Schrödinger equation \eqref{NLS} with $\mu = 1$ and $T=1$ using the low-regularity integrator (LRI) \eqref{LRI}. As initial datum we use $u_0 = \phi/\|\phi\|_{L^2(\T)}$, where the function  $\phi \in H^1(\T)$ is defined by its Fourier coefficients
\begin{equation}\label{eq:ConstrInitialData}
	 \hat \phi_k = (1+|k|^2)^{-\frac12({\frac12+s}+\eps)}r_k
\end{equation}
for $k \in \{-K_0,\dots,K_0-1\}$, and $\hat\phi_k=0$ elsewhere. Here we use the {regularity index $s=1$}, the maximum frequency $K_0 = 2^{19}$, uniformly distributed numbers $r_k \in [-1,1]+\iu[-1,1]$, and a very small parameter $\eps>0$. The results are similar if we {set $r_k = 1$ (for all $k$)} instead. 
The space is discretized by the standard Fourier pseudo-spectral method, where we choose $K = 2^{11}$ grid points. The reference solution is computed using a Strang splitting with $\tau_{\mathrm{ref}} = 10^{-7}$. Higher values of $K$ and/or smaller values of $\tau_\mathrm{ref}$ did not change the outcome for the range of $\tau$ considered in the experiment below. 

In Figure \ref{fig}, we plot the maximal errors in the $L^2(\T)$ norm against the step sizes $\tau$.
We observe a convergence rate of approximately $3/4$ as in \cite{ORSBourgain} for the greater values of $\tau$ in the range $(10^{-3},10^{-2})$. For smaller values of $\tau$, the rate increases to approximately $9/10$.
In Table \ref{table}, we list the values of the step sizes $\tau_\ell$ and $L^2$ errors $e_\ell$, where the index $\ell \in \{1,\dots,10\}$ denotes the {corresponding} run of the experiment. Moreover, we compute the experimental order of convergence (EOC) by
\begin{equation}\label{eq:eoc}
\mathrm{EOC}_\ell = \frac{\log e_\ell-\log e_{\ell-1}}{\log \tau_\ell - \log \tau_{\ell-1}}. 
\end{equation}

\begin{figure}[h]
	\begin{tikzpicture}
		
		\definecolor{darkgray176}{RGB}{176,176,176}
		\definecolor{darkgreen}{RGB}{0,100,0}
		\definecolor{lightgray204}{RGB}{204,204,204}
		\definecolor{limegreen}{RGB}{50,205,50}
		
		\begin{axis}[
			legend cell align={left},
			legend style={
				fill opacity=0.8,
				draw opacity=1,
				text opacity=1,
				at={(0.03,0.97)},
				anchor=north west,
				draw=lightgray204
			},
			log basis x={10},
			log basis y={10},
			tick align=outside,
			tick pos=left,
			x grid style={darkgray176},
			xlabel={$\tau$},
			xmin=6.30957344480193e-07, xmax=0.0158489319246111,
			xmode=log,
			xtick style={color=black},
			y grid style={darkgray176},
			ylabel={$L^2$ error},
			ymin=2.305729521435e-05, ymax=0.217684787439361,
			ymode=log,
			ytick style={color=black}
			]
			\addplot [semithick, red, mark=x, mark size=3, mark options={solid}]
			table {%
				0.01 0.0884616179029136
				0.0035938 0.0413202866272042
				0.0012915 0.0191290805761551
				0.0004642 0.00839259839935793
				0.0001668 0.00346616832948106
				5.99e-05 0.00141577249001441
				2.15e-05 0.000570610789700885
				7.7e-06 0.0002259541893281
				2.8e-06 8.9753884019652e-05
				1e-06 3.49535326087069e-05
			};
			\addlegendentry{LRI \eqref{LRI}}
			\addplot [semithick, black, densely dotted]
			table {%
				0.01 0.0890221127607082
				0.0035938 0.0413202866272042
				0.0012915 0.0191786796524655
				0.0004642 0.00890280270871135
				0.0001668 0.00413185366393141
				5.99e-05 0.00191676121926721
				2.15e-05 0.000888846449796703
				7.7e-06 0.000411496306139365
				2.8e-06 0.000192693214058451
				1e-06 8.90221127607082e-05
			};
			\addlegendentry{$\mathcal O(\tau^{3/4})$}
			\addplot [semithick, black, dashed]
			table {%
				0.01 0.143596999589452
				0.0035938 0.0571667995906763
				0.0012915 0.0227578030699812
				0.0004642 0.00906108048568466
				0.0001668 0.00360679789248233
				5.99e-05 0.00143492600296312
				2.15e-05 0.000570610789700885
				7.7e-06 0.000226457661464577
				2.8e-06 9.11145204539565e-05
				1e-06 3.60699354874231e-05
			};
			\addlegendentry{$\mathcal O(\tau^{9/10})$}
		\end{axis}
	\end{tikzpicture}
	\caption{$L^2$ errors {of \eqref{LRI} applied to the nonlinear Schrödinger equation \eqref{NLS}.} The result of the experiment does not change if a more accurate spatial discretization and/or reference solution is used.}\label{fig}
\end{figure}

\begin{table}[h]
	\centering
	\caption{Experimental order of convergence in Figure \ref{fig} according to \eqref{eq:eoc}.}\label{table}
	\begin{tabular}{||c c c||} 
		\hline
		$\tau$ & $L^2$ error & EOC \\
		\hline\hline
		1e-02 & 8.84e-02 & --  \\ 
		3.59e-03 & 4.13e-02 & 0.74  \\
		1.29e-03 & 1.91e-02 & 0.75  \\
		4.64e-04 & 8.40e-03 & 0.81  \\
		1.67e-04 & 3.47e-03 & 0.86  \\  
		5.99e-05 & 1.42e-03 & 0.87  \\  
		2.15e-05 & 5.71e-04 & 0.89  \\  
		7.7e-06 & 2.26e-04 & 0.90  \\  
		2.8e-06 & 8.98e-05 & 0.91  \\  
		1e-06 & 3.50e-05 & 0.92  \\  
		\hline
	\end{tabular}
\end{table}

The results in Table \ref{table} indicate that the order of convergence is still increasing for very small values of $\tau$.
According to Theorem \ref{ThmLRI}, the convergence rate $1$ must show up for even smaller values of $\tau$. Unfortunately, we were unable to make this visible numerically since further experiments revealed that this would require a disproportionate computational effort.

\subsection{{Nonlinear wave equation}}
{In Figure 1 of \cite{KleinGordon}, the authors present a numerical example concerning the Sine--Gordon equation, i.e., the wave equation \eqref{NLW2} with nonlinearity $g(u)= -\sin(u)$.}
It shows second-order convergence in $H^1 \times L^2$ of the corrected Lie splitting \eqref{CorrLie} {for $H^1 \times L^2$ initial data} as predicted by our Theorem \ref{ThmCorrLie}. Comparisons with other schemes are also provided.

{Here, we present an analogous experiment for the defocusing cubic wave equation, which corresponds to \eqref{NLW2} with the different nonlinear function $g(u)=-u^3$. 
We solve this equation with the corrected Lie splitting (``Corr\-Lie'') \eqref{CorrLie} until time $T=1$. The initial data $(u_0,v_0) \in H^1 \times L^2$ are constructed using   \eqref{eq:ConstrInitialData} with $s=1$ and $s=0$, respectively, and scaled such that $\|u_0\|_{H^1} = \|v_0\|_{L^2} \approx 1$. 
The space is again discretized by the Fourier pseudo-spectral method, this time with $K = 2^{14}$ grid points. The reference solution is computed using \eqref{CorrLie} with $\tau_{\mathrm{ref}} = 10^{-5}$.} 

{In Figure \ref{fig2}, we plot the maximal errors in the $H^1(\T) \times L^2(\T)$ norm against various values of the step size $\tau$.
We observe that the order of convergence is precisely two, as predicted by our Theorem \eqref{ThmCorrLie} and as already observed in the case of the Sine--Gordon equation in \cite{KleinGordon}.}

\begin{figure}[h]
\begin{tikzpicture}
	
	\definecolor{darkgray176}{RGB}{176,176,176}
	\definecolor{darkgreen}{RGB}{0,100,0}
	\definecolor{lightgray204}{RGB}{204,204,204}
	\definecolor{limegreen}{RGB}{50,205,50}
	
	\begin{axis}[
		legend cell align={left},
		legend style={
			fill opacity=0.8,
			draw opacity=1,
			text opacity=1,
			at={(0.03,0.97)},
			anchor=north west,
			draw=lightgray204
		},
		log basis x={10},
		log basis y={10},
		tick align=outside,
		tick pos=left,
		x grid style={darkgray176},
		xlabel={$\tau$},
		xmin=7.94328234724282e-05, xmax=0.0125892541179417,
		xmode=log,
		xtick style={color=black},
		y grid style={darkgray176},
		ylabel={$H^1 \times L^2$ error},
		ymin=4.89943103706789e-09, ymax=0.000264249162289053,
		ymode=log,
		ytick style={color=black}
		]
		\addplot [semithick, red, mark=x, mark size=3, mark options={solid}]
		table {%
			0.01 7.82353826233572e-05
			0.00785 4.8270889734632e-05
			0.00616 2.97281108876031e-05
			0.00483 1.83114013719391e-05
			0.00379 1.12944224762412e-05
			0.00298 6.99475615966326e-06
			0.00234 4.31865208717661e-06
			0.00183 2.64661126273916e-06
			0.00144 1.64166113561958e-06
			0.00113 1.01277975521301e-06
			0.00089 6.29432794028145e-07
			0.0007 3.90209663991512e-07
			0.00055 2.41388289252324e-07
			0.00043 1.4785653015372e-07
			0.00034 9.26343382245344e-08
			0.00026 5.42850627720861e-08
			0.00021 3.54639074151668e-08
			0.00016 2.06107269874879e-08
			0.00013 1.36077217547667e-08
			0.0001 8.03952808656627e-09
		};
		\addlegendentry{CorrLie \eqref{CorrLie}}
		\addplot [semithick, black, dashed]
		table {%
			0.01 0.000161038127275346
			0.00785 9.92357199802501e-05
			0.00616 6.11068836233937e-05
			0.00483 3.75684236739382e-05
			0.00379 2.3131677639958e-05
			0.00298 1.43008298545598e-05
			0.00234 8.81780369708885e-06
			0.00183 5.39300584432406e-06
			0.00144 3.33928660718158e-06
			0.00113 2.05629584717889e-06
			0.00089 1.27558300614802e-06
			0.0007 7.89086823649196e-07
			0.00055 4.87140335007922e-07
			0.00043 2.97759497332115e-07
			0.00034 1.861600751303e-07
			0.00026 1.08861774038134e-07
			0.00021 7.10178141284276e-08
			0.00016 4.12257605824886e-08
			0.00013 2.72154435095335e-08
			0.0001 1.61038127275346e-08
		};
		\addlegendentry{$\mathcal O(\tau^2)$}
	\end{axis}
	
\end{tikzpicture}
\caption{{$H^1 \times L^2$ errors of \eqref{CorrLie} applied to the nonlinear wave equation \eqref{NLW2} with $g(u)=-u^3$. The result of the experiment does not change if a more accurate spatial discretization} {and/or reference solution is used.}}\label{fig2}
\end{figure}

\printbibliography

@ARTICLE{ORS,
	AUTHOR = {Ostermann, A. AND Rousset, F. AND Schratz, K.},
	TITLE = {Error estimates of a Fourier integrator for the cubic Schrödinger equation at low regularity},
	JOURNAL = {Found. Comput. Math.},
	VOLUME  = {21},
	NUMBER  = {},
	YEAR = {2021},
	PAGES = {725--765}
}

@ARTICLE{KdVLowReg,
	AUTHOR = {Hofmanová, M. AND Schratz, K.},
	TITLE = {An exponential-type integrator for the KdV equation},
	JOURNAL = {Numer. Math.},
	VOLUME  = {136},
	NUMBER  = {},
	YEAR = {2017},
	PAGES = {1117--1137}
}

@ARTICLE{KdVUnfiltered,
	AUTHOR = {Li, B. AND Wu, Y.},
	TITLE = {An unfiltered low-regularity integrator for the KdV equation with solutions below $H^1$},
	JOURNAL = {Found. Comput. Math.},
	%VOLUME  = {136},
	%NUMBER  = {},
	YEAR = {2025},
	DOI = {https://doi.org/10.1007/s10208-025-09702-0},
	%PAGES = {1117--1137}
}

@ARTICLE{OS,
	AUTHOR = {Ostermann, A. AND Schratz, K.},
	TITLE = {Low regularity exponential-type integrators for semilinear Schrödinger equations},
	JOURNAL = {Found. Comput. Math.},
	VOLUME  = {18},
	NUMBER  = {},
	YEAR = {2018},
	PAGES = {731--755}
}

@ARTICLE{RS,
	AUTHOR = {Rousset, F. AND Schratz, K.},
	TITLE = {A general framework of low regularity integrators},
	JOURNAL = {SIAM J. Numer. Anal.},
	VOLUME  = {59},
	NUMBER  = {3},
	YEAR = {2021},
	PAGES = {1735--1768}
}

@ARTICLE{IgnatSplitting,
	AUTHOR = {Ignat, L.},
	TITLE = {A splitting method for the nonlinear Schrödinger equation},
	JOURNAL = {J. Differential Equations},
	VOLUME  = {250},
	NUMBER  = {},
	YEAR = {2011},
	PAGES = {3022--3046}
}

@ARTICLE{ExpInt,
	AUTHOR = {Hochbruck, M. AND Ostermann, A.},
	TITLE = {Exponential integrators},
	JOURNAL = {Acta Numerica},
	VOLUME  = {19},
	NUMBER  = {},
	YEAR = {2010},
	PAGES = {209--286}
}

@ARTICLE{Gauckler,
	AUTHOR = {Gauckler, L.},
	TITLE = {Error analysis of trigonometric integrators for semilinear Klein-Gordon equations},
	JOURNAL = {SIAM J. Numer. Anal.},
	VOLUME  = {53},
	NUMBER  = {},
	YEAR = {2015},
	PAGES = {1082--1106}
}

@ARTICLE{Lubich,
	AUTHOR = {Lubich, C.},
	TITLE = {On splitting methods for Schrödinger-Poisson and cubic nonlinear Schrödinger equations},
	JOURNAL = {Math. Comp.},
	VOLUME  = {77},
	NUMBER  = {},
	YEAR = {2008},
	PAGES = {2141--2153}
}

@BOOK{Tao2006,
	AUTHOR = {Tao, T.},
	YEAR = {2006},
	TITLE = {Nonlinear Dispersive Equations: Local and Global Analysis},
	EDITION = {},
%	ISBN = {978-0-821-84143-3},
	PUBLISHER = {American Mathematical Society},
	ADDRESS = {Providence, RI},
}

@ARTICLE{ORSBourgain,
	AUTHOR = {Ostermann, A. AND Rousset, F. AND Schratz, K.},
	TITLE = {Fourier integrator for periodic NLS: low regularity estimates via discrete Bourgain spaces},	
	JOURNAL = {J. Eur. Math. Soc.},
	VOLUME  = {25},
	NUMBER  = {10},
	YEAR = {2023},
	PAGES = {3913--3952}
}

@ARTICLE{KleinGordon,
	AUTHOR = {Li, B. AND Schratz, K. AND Zivcovich, F.},
	TITLE = {A second-order low-regularity correction of Lie splitting for the semilinear Klein--Gordon equation},	
	JOURNAL = {ESAIM Math. Model. Numer. Anal.},
	VOLUME  = {57},
	NUMBER  = {2},
	YEAR = {2023},
	PAGES = {899--919}
}

@ARTICLE{RuffSchnaubelt,
	AUTHOR = {Ruff, M. AND Schnaubelt, R.},
	TITLE = {Error analysis of the Lie splitting for semilinear 
	wave equations with finite-energy solutions},
	JOURNAL = {Discrete Contin. Dyn. Syst.},
	VOLUME  = {45},
	NUMBER  = {9},
	YEAR = {2025},
	PAGES = {2969--3008}	
}

@ARTICLE{DiscSol,
	AUTHOR = {Cao, J. AND Li, B. AND Lin, Y. AND Yao, F.},
	TITLE = {Numerical approximation of discontinuous solutions of the semilinear wave equation},	
	JOURNAL = {SIAM J. Numer. Anal.},
	VOLUME  = {63},
	NUMBER  = {1},
	YEAR = {2025},
	PAGES = {214--238}
}

@ARTICLE{Zygmund,
	AUTHOR = {Zygmund, A.},
	TITLE = {On Fourier coefficients and transforms of functions of two variables},
	JOURNAL = {Studia Math.},
	VOLUME  = {50},
	NUMBER  = {},
	YEAR = {1974},
	PAGES = {189–-201}
}

@ARTICLE{Bourgain93,
	AUTHOR = {Bourgain, J.},
	TITLE = {Fourier transform restriction phenomena for certain lattice subsets and applications to nonlinear evolution equations I: Schrödinger equations},
	JOURNAL = {Geom. Funct. Anal.},
	VOLUME  = {3},
	NUMBER  = {},
	YEAR = {1993},
	PAGES = {107–-156}
}

@ARTICLE{KlainermanMachedon,
	AUTHOR = {Klainerman, S. and Machedon, M.},
	TITLE = {Space-time estimates for null forms and the local existence theorem},
	JOURNAL = {Comm. Pure Appl. Math.},
	VOLUME  = {46},
	NUMBER  = {},
	YEAR = {1993},
	PAGES = {1221–-1268}
}

@ARTICLE{CaoSchr,
	AUTHOR = {Cao, J. AND Li, B. AND Lin, Y.},
	TITLE = {A new second-order low-regularity integrator for the cubic nonlinear Schrödinger equation},
	JOURNAL = {IMA J. Numer. Anal.},
	VOLUME  = {44},
	NUMBER  = {},
	YEAR = {2024},
	PAGES = {1313–-1345}
}

@ARTICLE{OWY,
	AUTHOR = {Ostermann, A. AND Wu, Y. AND Yao, F.},
	TITLE = {A second-order low-regularity integrator for
	the nonlinear Schrödinger equation},
	JOURNAL = {Adv. Contin. Discrete Models},
	VOLUME  = {},
	NUMBER  = {23},
	YEAR = {2022},
	PAGES = {}
}

@ARTICLE{Alama,
	AUTHOR = {Alama Bronsard, Y.},
	TITLE = {A symmetric low-regularity integrator for the nonlinear Schrödinger equation},
	JOURNAL = {IMA J. Numer. Anal.},
	VOLUME  = {44},
	NUMBER  = {6},
	YEAR = {2024},
	PAGES = {3648–-3682}
}

@ARTICLE{BrunedSchratz,
	AUTHOR = {Bruned, Y. AND Schratz, K.},
	TITLE = {Resonance-based schemes for dispersive equations via decorated trees},
	JOURNAL = {Forum Math. Pi},
	VOLUME  = {10},
	NUMBER  = {},
	YEAR = {2022},
	PAGES = {1–-76}
}

@ARTICLE{MaierhoferExplSym,
	AUTHOR = {Feng, Y. AND Maierhofer, G. AND Wang, C.},
	TITLE = {Explicit symmetric low-regularity integrators for the nonlinear Schrödinger equation},
	JOURNAL = {SIAM J. Sci. Comput.},
	VOLUME  = {47},
	NUMBER  = {4},
	YEAR = {2025},
	PAGES = {A2154–-A2179}
}

@MISC{R,
	AUTHOR = {Ruff, M.},
	TITLE = {Error analysis of the Strang splitting for the 3D semilinear wave equation with finite-energy data},
	NOTE = {arXiv preprint},
	DOI = {https://doi.org/10.48550/arXiv.2503.13126},
	YEAR = {2025}
}

\end{document}